\newtheorem{theorem}{Theorem}
\newtheorem{lemma}{Lemma}
\theoremstyle{definition}
\newtheorem{remark}{Remark}
\renewcommand{\maketitle}{
\pagestyle{plain}
\vspace*{\baselineskip}
\begin{center}

\MakeUppercase{\small Tom Werner} \\ \emph{Institute for Numerical Analysis, TU Braunschweig} \\ \emph{Universit\"atsplatz 2, 38106 Braunschweig, Germany} \\ (E-Mail: tom.werner@tu-braunschweig.de) 

\vspace*{3\baselineskip}

\MakeUppercase{\large \@title}
\end{center} \vskip-\baselineskip
}
\titleformat{\section}[hang]{\Large\scshape}{\thesection. }{0pt}{\centering}[]
\titleformat{\subsection}[hang]{\large\scshape}{\thesubsection. }{0pt}{\centering}[]
\titleformat{\subsubsection}[hang]{\scshape}{\thesubsubsection. }{0pt}{\centering}[]
\title{On using the complex step method for the approximation of Fréchet derivatives of matrix functions in automorphism groups}
\begin{document}
\maketitle

\begin{abstract}
\noindent
We show, that the Complex Step approximation to the Fréchet derivative of real matrix functions is applicable to the matrix sign, square root and polar mapping using iterative schemes. While this property was already discovered for the matrix sign using Newton's method, we extend the research to the family of Padé iterations, that allows us to introduce iterative schemes for finding function and derivative values while approximately preserving automorphism group structure.\\
\textbf{Keywords:} matrix sign, Polar decomposition, Fréchet derivative, Complex Step, Newton iteration 
\end{abstract}
\section{Introduction}
Matrix functions $f:\mathbb{C}^{m,n}\rightarrow\mathbb{C}^{m,n}$ are of increasing interest for many practical applications arising in mechanics, electrodynamics, statistics, pyhsics and mathematics \cite[Sec. 2]{matrixfunctions}. While a few matrix problems have scalar or vector-valued equivalents with well-studied solution techniques, that can be used for matrices with minor adjustments as well, there are also a lot of issues that are unique to matrix problems. These include eigenvalue or singular value problems, the non-commutativity of matrix multiplication or additional properties such as orthogonality or sparsity of desired solutions.\\
In this paper, we will focus on the matrix square root, sign and polar function that are widely used for solving subproblems arising from the applications mentioned before, including nonlinear matrix equations, Ricatti- and Sylvester equations and eigenvalue problems \cite[Sec. 2]{matrixfunctions}. For these functions, we present a variety of direct and iterative methods for evaluating them at a given point.\\
As in some of the applications mentioned above the problem formulation provides a special type of automorphism structure, we address the question whether or not our direct and iterative methods are able to preserve that given structure. In our case, it turns out that the iterative methods have to be chosen carefully since some methods are able to preserve the structure while others are not.\\
The third topic we discuss is how to compute the Fréchet derivative $L_f(A,E)$ of a matrixfunction $f$ at a given matrix $A$ in direction of a given matrix $E$. The (Fréchet) derivative of a function is of interest in pratical applications as it enables a way to compute or estimate the functions condition number, which is a key parameter for sensitivity analysis. In case of data-based applications, the condition number of $f$ indicates, how sensitive the mathematical model is to inaccuracy of the input data and therefore expose, how accurat the input data has to be measured to obtain useful results from the solution. To compute the Fréchet derivatives of our three main functions, we present a direct approach and two possibilities for modifying the existing iterative methods in a way that allows us to compute $f(A)$ and $L_f(A,E)$ simultaneously.\\
The main task in this paper is to present a way for computing $f(A)$ and $L_f(A,E)$ using a single complex iteration while also preserving automorphism structure throughout the iteration. The presented class of iterative schemes uses the so-called Complex Step approximation \begin{equation*}
L_f(A,E)\approx \mathrm{Im}\left(\frac{f(A+ihE)}{h}\right)
\end{equation*}
to $L_f(A,E)$, where $h$ is a real scalar, $A$ and $E$ are real matrices and $f$ is a matrix function, in combination with our iterative methods introduced before.\\
The following chapters are organized as follows: First, we recall some results and definitions we need for our later discussions. Those include the definition of the sign, square root and polar function, some properties of the Fréchet derivative of matrix functions and the matrix valued Newton iteration.\\
In the third chapter, we present some earlier results concerning iterative methods for computing the three functions. We will first have a look at standard Newton methods and then move over to the family of Padé iterations, which will provide us a tool for preserving automorphism structure in our iteration. To conclude the section, we compare a direct and an iterative approach for computing the Fréchet derivative of any of the functions.\\
In the fourth chapter, we introduce the Complex Step (CS) approach for scalar and matrix functions as a way for approximating the Fréchet derivative. We apply the CS to the Newton methods discussed in the third chapter and show that this allows us to iteratively compute the Fréchet derivative of any of the functions.\\
Our main result is about the combination of the CS and the family of Padé iterations and will be stated in \Cref{thm:CSpade} in the fifth chapter. We will show that combining the CS and the Padé schemes introduced before, we are able to compute $f(A)$ and $L_f(A,E)$ simultaneously in a single iteration by extracting the real and imaginary parts of the iterates while also preserving automorphism structure in the real part of the iterates.\\
To conclude our work, we present a few numerical tests emphasizing our theoretical results from before. The results are then analyzed and discussed and possible starting points for further research are mentioned.      
\section{Preliminaries, notation and definitions}
\subsection{Preliminaries and notation}
Throughout this paper, we will use the following notation. We denote the set of real or complex $m\times n$ matrices by $\mathbb{R}^{m,n}$ or $\mathbb{C}^{m,n}$, respectively, and by $\mathbb{R}^-$ the set of real non-positive scalars $(-\infty,0]$. In the case of complex values $x$, we use $\mathrm{Re}(x)$ and $\mathrm{Im}(x)$ for the real and imaginary part, respectively and apply the notation equally to matrices. We denote matrices in capital letters, where for a given matrix $X=\left(x_{ij}\right)_{i,j}$, the expressions $X^T$, $X^H=\overline{X}^T$, $X^{-1}$ and $X^+$ stand for the transpose, conjugate transpose, inverse and Moore-Penrose inverse of $X$, respectively. The identity matrix is denoted $I$ or $I_n$, if the size is not clear from the context. Additionaly, we use $\mathrm{O}_n(\mathbb{R})$ or $\mathrm{O}_n(\mathbb{C})$ for the set of orthogonal $n\times n$ matrices (i.e. $X^TX=I_n$) over $\mathbb{R}$ or $\mathbb{C}$ and $\mathrm{U}_n(\mathbb{C})$ for the set of unitary matrices (i.e. $X^HX=I_n$). The spectrum $\sigma(X)$ stands for the set of eigenvalues of $X$ and $||X||_F=\sqrt{\left(\sum_{i,j}x_{ij}^2\right)}$ is the Frobenius-Norm, which we will mostly use for convergence analysis. To characterize convergence and approximation properties, we use the Landau- or O-Notation in the following way: we say that $f=\mathcal{O}(g)$ or $f\in \mathcal{O}(g)$, if 
\begin{equation*}
\limsup_{x\rightarrow a}\left|\frac{f(x)}{g(x)}\right| <\infty.
\end{equation*} 
That is, we have $f=\mathcal{O}(g)$ if there exists a constant $C>0$ such that $|f(x)|\leq C |g(x)|$ for $x\rightarrow a$. 
We will mostly need the case $x\rightarrow 0$ for our application. For matrix functions $f$ and $g$, we use the Frobenius-Norm for $f$ and $g$. Furthermore, we say that $f=o(g)$ or $f\in o(g)$, if 
\begin{equation*}
\limsup_{x\rightarrow a}\left|\frac{f(x)}{g(x)}\right| = 0.
\end{equation*}   
Concerning matrix decompositions, we will need the singular value decomposition $A=U\Sigma V^H$ for complex rectangular matrices and the Jordan canonical form $A=ZJZ^{-1}$ for complex square matrices.
\subsection{The square root, sign and polar function}\label{sec:functions}
\subsubsection{The Square Root Function} 
Given a square matrix $A\in\mathbb{C}^{n,n}$, any matrix $X\in\mathbb{C}^{n,n}$ that satisfies $X^2 = A$ is a square root of $A$. While, in the scalar case, this definition secures uniqueness of the square root (up to its sign), there can exist infinitely many square roots of $A$ in the matrix case, if no further restrictions are impsoed on $A$ or $X$. A common example is the family of nilpotent matrices of degree two, that are all roots of the matrix $0_{n,n}$ of all zeros.\\
However, if $A$ has no eigenvalues on $\mathbb{R}^-$, it can be shown that there is only one solution $X$ to $A=X^2$ with eigenvalues entirely in the open right half-plane (that is, $\mathrm{Re}(\lambda)>0$, for every $\lambda\in\sigma(X)$). This solution is called the principal square root of $A$ and is denoted $X=A^{1/2}$. The principal square root satisfies the following properties\cite[Thm. 1.13, 1.18 + 1.29]{matrixfunctions}:
\begin{enumerate}
\item $A\in\mathbb{R}^{n,n}$ $\Longrightarrow$ $A^{1/2}\in\mathbb{R}^{n,n}$
\item $\left(A^T\right)^{1/2}=\left(A^{1/2}\right)^T$ and $\left(A^H\right)^{1/2}=\left(A^{1/2}\right)^H$
\item $\left(A^{-1}\right)^{1/2}=\left(A^{1/2}\right)^{-1}\equiv A^{-1/2}$
\end{enumerate}
Additionaly, the principal square root function is analytic, hence continuous, on the set of matrices with no eigenvalues on $\mathbb{R}^-$\cite{analytic1}.\\
The square root is frequently used for solving definite generalized eigenvalue problems and for computing solutions to quadratic matrix equations, such as algebraic Riccati equations \cite[Sec. 2]{matrixfunctions,analytic1,analytic2}
\subsubsection{The Sign Function}
In the scalar case, the sign function maps a complex value to the sign of its real part. That is, for $x\in\mathbb{C}$ with $\mathrm{Re}(x)\neq 0$, the sign of $x$ is given by
\begin{equation}
\mathrm{sign}(x) = \begin{cases}1, & \mathrm{Re}(x)>0.\\
								-1 & \mathrm{Re}(x)<0.\end{cases}
\end{equation}
In the matrix case, given a Jordan canonical form $A=ZJZ^{-1}$ of $A$ with $J=\mathrm{diag}(J_1,J_2)$, the sign of $A$ is defined as
\begin{equation}
\mathrm{sign}(A)=\mathrm{sign}\left(Z\begin{bmatrix}J_1 & 0\\0 & J_2\end{bmatrix} Z^{-1}\right) = Z \begin{bmatrix}-I_p & 0\\ 0 & I_{n-p}\end{bmatrix} Z^{-1},
\end{equation}
where $J_1\in\mathbb{C}^{p,p}$ contains the Jordan blocks for eigenvalues of $A$ with negative real parts and $J_2\in\mathbb{C}^{(n-p),(n-p)}$ contains the blocks for eigenvalues with positive real parts. In case of a diagonalizable $A$, $\mathrm{sign}(A)$ can be seen as computing the scalar sign of every eigenvalue of $A$ seperately. Notice that, in analogy to the scalar definition, purely imaginary eigenvalues of $A$ are not feasible for this definition.\\
The matrix sign function is used in control theory as a tool for solving a particular type of Lyapunov and Riccati equations arising from discretization or for counting eigenvalues in designated areas of the complex plain \cite[Sec. 2]{matrixfunctions}.     
\subsubsection{The Polar Function}
Given a rectangular matrix $A\in\mathbb{C}^{m,n}$ with $m\geq n$, $A$ can be decomposed as 
\begin{equation}
	A = QP,\quad Q\in\mathbb{C}^{m,n},\quad P\in\mathbb{C}^{n,n}, \label{eq:polar}
\end{equation}
where $P$ is Hermitian positive semidefinite and $Q$ has orthonormal columns. A decomposition as in \eqref{eq:polar} is called polar decomposition of $A$, with $Q$ being called the unitary polar factor and $P$ the Hermitian polar factor. It can be shown that $P$ is unique in any polar decomposition of $A$ and explicitly given by $P=\bigl(A^HA\bigr)^{1/2}$. Moreover, if $A$ has full rank ($\mathrm{rank}(A)=n$), $P$ is positive definite and $Q$ is unique as well. A polar decomposition can be computed taking the left and right singular vectors of an economy sized SVD $A=U\Sigma V^H$ of $A$ and defining $Q=UV^H$ and $P=V\Sigma V^H$ \cite[Sec. 8]{matrixfunctions}. In the following, we use the notation $Q=\mathcal{P}(A)$ for the unitary polar factor. The polar decomposition is of special interest in approximation applications, as the unitary and Hermitian factors lead to the closest unitary and Hermitian semidefinite approximations to $A$ \cite[Sec. 8.1]{matrixfunctions}.
\subsubsection{Connections between the square root, sign and polar function}\label{sec:connection}
The three functions introduced in \Cref{sec:functions} are connected to each other by the following formulae: Given a method for evaluating the principal square root for a given matrix, one can compute the sign and polar factor of that matrix by the identities
\begin{equation}
	\mathrm{sign}(A)=A\left(A^2\right)^{-1/2}\quad\text{and}\quad \mathcal{P}(A)=A\left(A^HA\right)^{-1/2} \label{eq:sqrtidentities}
\end{equation} 
using only elementary matrix operations and the principal square root (see \cite[Sec. 2]{highamsign}).
\begin{remark} \label{rem:signpolarsqrt}
Note that $A$ having no eigenvalues on the imaginary axis, which is required for the existence of $\mathrm{sign}(A)$, is equivalent to $A^2$ having no eigenvalues on $\mathbb{R}^-$, which is required for $A^{1/2}$ to exist. Similarly, if $A\in\mathbb{C}^{m,n}$ has full rank, $A^HA$ is Hermitian positive definite and, as such, does not have eigenvalues on $\mathbb{R}^-$, making \eqref{eq:sqrtidentities} well defined.\\
Additionally, we can see that the sign and polar mapping are continuous on the set of matrices with no imaginary eigenvalues and matrices of full rank, respectively, since they are compositions of continuous functions on these sets \cite{analytic2}.
\end{remark}
\noindent
On the other hand, if one has access to an algorithm for computing the sign of a given matrix, Higham\cite[Sec. 5]{matrixfunctions} has shown that, given $A,B\in\mathbb{C}^{n,n}$ with $AB$ having no eigenvalues on $\mathbb{R}^-$, 
\begin{equation}
\mathrm{sign}\left(\begin{bmatrix}0 & A\\ B & 0\end{bmatrix}\right)=\begin{bmatrix}0 & C\\ C^{-1} & 0\end{bmatrix} \label{eq:blocksign}
\end{equation} 
holds, where $C=A(BA)^{-1/2}$. Using \eqref{eq:blocksign} and choosing $B=I$ leads to the expression
\begin{equation}
\mathrm{sign}\left(\begin{bmatrix}0 & A\\ I & 0\end{bmatrix}\right)=\begin{bmatrix}0 & A^{1/2}\\ A^{-1/2} & 0\end{bmatrix},
\end{equation}
for the principal square root and inverse square root, whereas choosing $B=A^H$ yields the formula
\begin{equation}
\mathrm{sign}\left(\begin{bmatrix}0 & A\\ A^H & 0\end{bmatrix}\right)=\begin{bmatrix}0 & \mathcal{P}(A)\\ \mathcal{P}(A)^H & 0\end{bmatrix}
\end{equation}
for the unitary polar factor. These relations are important and should be kept in mind when designing efficient algorithms for evaluating any of the three functions.
\subsection{Fréchet derivatives}
As the set of real or complex matrices is a Banach space, differentiation of matrix functions is usually done in the sense of Fréchet derivatives. That is, given a matrix function $f:\mathbb{C}^{m,n}\rightarrow\mathbb{C}^{m,n}$, its Fréchet derivative $L_f(X)$ at a given point $X\in\mathbb{C}^{m,n}$ is a linear mapping $L_f(X):\mathbb{C}^{m,n}\rightarrow\mathbb{C}^{m,n}$, $E\mapsto L_f(X)(E)=L_f(X,E)$ that satisfies 
\begin{equation}
	f(X+E)-f(X)-L_f(X,E)=o(||E||), \label{eq:frechet}
\end{equation}
for every $E\in\mathbb{C}^{m,n}$. The Fréchet derivative, if it exists, can be shown to be unique\cite[Sec. 3.1]{matrixfunctions}.\\ 
More generally, the $k$-th Fréchet derivative of $f$ at a given point $X$ is the multilinear mapping 
\begin{align*}
&L_f^{[k]}(X):\mathbb{C}^{m,n}\times\cdots\times\mathbb{C}^{m,n}\rightarrow\mathbb{C}^{m,n},\\
&(E_1,\dots,E_k)\mapsto L_f^{[k]}(X;E_1,\dots,E_k),
\end{align*} 
that satisfies the recurrency
\begin{multline}
	L_f^{[k-1]}(X+E_k;E_1,\dots,E_{k-1})-L_f^{[k-1]}(X;E_1,\dots,E_{k-1})\\ -L_f^{[k]}(X;E_1,\dots,E_k)=o(||E_k||), \label{eq:higherfrechet}
\end{multline}
for every $E_1,\dots,E_k\in\mathbb{C}^{m,n}$, where $L_f^{[1]}(X)\equiv L_f(X)$ denotes the first Fréchet derivative obtained from \eqref{eq:frechet}\cite[Sec. 2]{highamfrechet2}.\\
For composite matrix functions, the following rules of differentiation apply:
\begin{enumerate}
\item For $g$ and $h$ being Fréchet differentiable at $A\in\mathbb{C}^{m,n}$ and $f=\alpha g + \beta h$, we have 
\begin{equation*}
	L_f(A,E)=\alpha L_g(A,E) + \beta L_h(A,E).  \tag{sum rule}
\end{equation*}
\item For $g$ and $h$ being Fréchet differentiable at $A\in\mathbb{C}^{m,n}$ and $f=g \cdot h$, we have 
\begin{equation*}
	L_f(A,E)=g(A)L_h(A,E) + L_g(A,E) h(A). \tag{product rule}
\end{equation*}
\item For $h$ being Fréchet differentiable at $A\in\mathbb{C}^{m,n}$, $g$ being Fréchet differentiable at $h(A)$ and $f=g\circ h$, we have 
\begin{equation*}
	L_f(A,E)=L_g(h(A),L_h(A,E)). \tag{chain rule}
\end{equation*}
\item If $f$ and $f^{-1}$ both exist and are continuous in a neighbourhood of $A\in\mathbb{C}^{n,n}$ and $f(A)$, respectively, and $L_f$ exists and is nonsingular at $A$, we have 
\begin{equation*}
	L_{f^{-1}}(f(A),E)=L_f^{-1}(A,E). \tag{inverse rule}
\end{equation*}
\end{enumerate} 
The Fréchet derivative is a strong tool when working with matrix functions as it allows to perform sensitivity analysis and enables a wide class of iterative methods for solving nonlinear matrix equations, such as Newton's or Halley's method \cite{halley2,newton,halley}.
\subsection{Newton Iterations}\label{sec:basenewton}
The most common approach for evaluating matrix functions iteratively is to use variants of Newton's method applied to a properly chosen target function. Taking the matrix square root as an example, it is immediate to see that any root $X^*\in\mathbb{C}^{n,n}$ of the quadratic function $f(X)=X^2-A$ is a square root of the given matrix $A\in\mathbb{C}^{n,n}$.\\
Recall that, for a vector valued function $g:\mathbb{C}^n \rightarrow \mathbb{C}^n$, the iterates $\{x_k\}_{k=0,1,...}$ in Newton's method \cite[Sec. 2.11]{numericalanalysis} are computed using the update rule
\begin{equation}
	x_{k+1}=x_k + d_k,\quad\text{where }d_k\text{ solves}\quad J_g(x_k) d = -f(x_k), \label{eq:vecnewton}
\end{equation}  
assuming nonsingularity of the Jacobian $J_g(x_k)$ for every iterate $x_k$. Under some mild assumptions, the iterative scheme \eqref{eq:vecnewton} converges quadratically to a root $x^*\in\mathbb{C}^n$ of $g(x)$, given an initial guess $x_0\in\mathbb{C}^n$ that is sufficiently close to $x^*$ \cite[Sec. 2.11]{numericalanalysis}.\\
For Fréchet differentiable matrix functions $G:\mathbb{C}^{m,n}\rightarrow\mathbb{C}^{m,n}$, the update from \eqref{eq:vecnewton} can be directly transferred to the matrix setting by replacing the linear system of equations $J_g(x_k)d_k=-g(x_k)$ with the matrix equation $L_G(X_k,D_k)=-G(X_k)$. The resulting Newton update
\begin{equation}
	X_{k+1}=X_k + D_k, \quad X_0\in\mathbb{C}^{m,n},  \label{eq:matnewton}
\end{equation}
where $D_k$ solves the matrix correction equation
\begin{equation}
	L_G(X_k,D) = -G(X_k), \label{eq:correction}
\end{equation}	
is again locally quadratically convergent under mild assumptions \cite{kantorovich}.\\
To be able to use Newton's method to its full effect, one has to be able to solve \eqref{eq:correction} efficiently in every Newton step. This task can be challenging if $L_G(X_k)$ is not known explicitly or hard to evaluate. However, we will see in the following section, that the update $D_k$ can be given in closed form for the three functions from \Cref{sec:functions}.
\section{Iterative approaches for evaluating the square root, sign and polar function}
\subsection{Quadratic Newton Iterations}\label{sec:quadnewton}
As we have seen before, defining $f(X)=X^2-A$, any root of $f$ is a square root of the given matrix $A\in\mathbb{C}^{n,n}$. In particular, assuming that $A$ has no eigenvalues on $\mathbb{R}^-$ and choosing the initial value for Newton's method to be $X_0=A$, the Newton square root sequence 
\begin{equation}
X_{k+1}=\frac{1}{2}\left(X_k+X_k^{-1}A\right),\quad X_0=A, \label{eq:newtonsqrt}
\end{equation} 
can be shown to converge to the principal square root $A^{1/2}$ of $A$ quadratically \cite[Thm. 6.9]{matrixfunctions}.\\
For the matrix sign, we can use the fact that $S=\mathrm{sign}(A)$ is involutary (i.e. $S^{-1}=S$) for a matrix $A\in\mathbb{C}^{n,n}$ with no imaginary eigenvalues \cite[Thm. 5.1]{matrixfunctions}. Thus, defining $g(X)=X^2-I_n$, the matrix $S$ satisfies $g(S)=0$. Applying Newton's method to $g$ and initializing the sequence with $X_0=A$, one obtains the Newton sign iteration 
\begin{equation}
X_{k+1}=\frac{1}{2}\left(X_k+X_k^{-1}\right),\quad X_0=A, \label{eq:newtonsign}
\end{equation} 
which is again known to be quadratically convergent to $S=\mathrm{sign}(A)$\cite[Thm. 5.6]{matrixfunctions}. In addition, by \eqref{eq:blocksign}, we have that \eqref{eq:newtonsign} converges to the limit 
\begin{equation}
S=\begin{bmatrix}0&A^{1/2}\\ A^{-1/2} & 0\end{bmatrix}\text{ if we choose }  
X_0=\begin{bmatrix}0&A\\ I_n & 0\end{bmatrix}. \label{eq:DBIinit}
\end{equation} 	
The resulting Newton update can be performed exclusively on the $(1,2)$- and $(2,1)$-block of the $2n$-by-$2n$-matrices $X_k$, which leads to the well-known Denman-Beavers-iteration (DB)
\begin{equation}
\begin{array}{lll}
Y_{k+1}&=\frac{1}{2}\left(Y_k+Z_k^{-1}\right), &Y_0=A,\\
Z_{k+1}&=\frac{1}{2}\left(Z_k+Y_k^{-1}\right), &Z_0=I_n,
\end{array}
\label{eq:DBI}
\end{equation}
that yields iterates $Y_k$ and $Z_k$ that converge to $A^{1/2}$ and $A^{-1/2}$, respectively \cite[Sec. 6.3]{matrixfunctions,dbi}. The quadratic convergence speed of \eqref{eq:DBI} is immediate from the derivation via \eqref{eq:newtonsign}.\\
For the polar function, let us first consider real matrices $A$. In this case, we may use the fact that $Q=\mathcal{P}(A)$ is, for any matrix $A\in\mathbb{R}^{m,n}$ having full rank $n$, the unique matrix with orthogonal columns that is closest to $A$ \cite[Thm. 8.4]{matrixfunctions}. As a consequnce, $Q$ is the root of $h(X)=X^TX-I_n$ that is closest to $A$. For square nonsingular matrices $A$, the typical Newton procedure as introduced in \Cref{sec:basenewton} leads to the iteration 
\begin{equation}
X_{k+1}=\frac{1}{2}\left(X_k+X_k^{-T}\right),\quad X_0=A, \label{eq:newtonpolarsquarereal}
\end{equation} 
that can be shown to converge quadratically to the orthogonal factor $Q$ \cite[Thm. 8.12]{matrixfunctions}. For rectangular full rank $A$, one has to replace the inverse $X_k^{-1}$ by the pseudoinverse $X_k^+$ to obtain the iteration
\begin{equation}
X_{k+1}=\frac{1}{2}\left(X_k+(X_k^+)^{T}\right)=\frac{1}{2}X_k\left(I_n+(X_k^TX_k)^{-1}\right),\text{ } X_0=A, \label{eq:newtonpolarreal}
\end{equation} 
which shares the convergence behaviour of \eqref{eq:newtonpolarsquare}\cite{rectpolar}.\\
For the case of complex matrices $A$, we want to emphasize that the mapping $X\mapsto X^TX$ is Fréchet differentiable (over $\mathbb{C}^{m,n}$) with 
\begin{equation*}
	L(A,E)=A^TE + E^T A,
\end{equation*} 
while the mapping $X\mapsto X^HX$ is not! This is due to the fact that linearity in  
\begin{equation*}
	L(A,\alpha E)=A^H\alpha E + (\alpha E)^H A=\alpha A^H E + \bar{\alpha} E^H A\neq \alpha L(A,E)
\end{equation*} 
is lost when conjugating the scaled direction $\alpha E$. However, it has been shown that defining $h(X)=X^HX-I_n$ for complex matrices and performing updates that are analogous to \eqref{eq:newtonpolarsquarereal} and \eqref{eq:newtonpolarreal}, one can use the iterative schemes
\begin{equation}
X_{k+1}=\frac{1}{2}\left(X_k+X_k^{-H}\right),\quad X_0=A, \label{eq:newtonpolarsquare}
\end{equation}  
and
\begin{equation}
X_{k+1}=\frac{1}{2}\left(X_k+(X_k^+)^{H}\right)=\frac{1}{2}X_k\left(I_n+(X_k^HX_k)^{-1}\right),\text{ } X_0=A, \label{eq:newtonpolar}
\end{equation} 
for computing the unitary factor $Q=\mathcal{P}(A)$ of the polar decomposition, even if $A$ is complex \cite[Thm. 8.4]{matrixfunctions}.  
\subsection{Iterations based on Padé approximations}
\subsubsection{Automorphism groups}
In many applications, the matrices of interest share a certain underlying structure, whether it is nonnegativity, sparsity or symmetry. In this work, we are particularly interested in automorphism group structure associated with a given bi- or sesquilinear form. That is, we aim to have a closer look at matrices $A\in\mathbb{C}^{n,n}$ that satisfy 
\begin{equation}
	A^H M A = M \quad \text{and/or} \quad A^T M A = M, \label{eq:automorphism}
\end{equation} 
for a given symmetric and nonsingular matrix $M$, which is equivalent to $A$ being an automorphism concerning the bi- or sesquilinear form 
$\langle\cdot,\cdot\rangle_M$ induced by $M$.  (i.e. $\langle Ax, Ay\rangle_M = \langle x,y\rangle_M$, for all $x,y\in\mathbb{C}^n$).\\
Note that the simplest case $M=I$ yields the standard euclidean scalar product, which leads to the group of orthogonal/unitary matrices. In our case, we are interested in the choices 
\begin{equation*}
J_{2n}=\begin{bmatrix} & I_n \\ -I_n &\end{bmatrix},\text{ }\Sigma_{p,q}=\begin{bmatrix} I_p & \\ & -I_q \end{bmatrix},\text{ }R_n=\begin{bmatrix} & & 1 \\ & \iddots & \\ 1 & & \end{bmatrix},
\end{equation*}  
for $M$ which lead to the sets of symplectic, pseudo-orthogonal and perplectic matrices, respectively, that play an important role in engineering and physics applications, such as Hamiltonian mechanics and optimal control problems \cite{mehrmann}. Since the particular choice of $M$ is not important for our purpose, we denote $A\in\mathbb{G}$ if $A$ satisfies \eqref{eq:automorphism} for $M\in\left\lbrace J_{2n},\Sigma_{p,q},R_n\right\rbrace$.\\
An important additional property of the square root, sign and polar we did not discuss in \Cref{sec:connection} is the fact, that the application of any of the three functions to an automorphism $A\in\mathbb{G}$ does preserve automorphism group structure \cite{structurepreserving}. That is, for an automorphism $A\in\mathbb{G}$, we have 
\begin{equation}
	\mathrm{sign}(A)\in\mathbb{G},\quad \mathcal{P}(A)\in\mathbb{G} \quad \text{and} \quad A^{1/2},A^{-1/2}\in\mathbb{G},
\end{equation} 
given that they are well defined for $A$. Thus, if we develop an iterative scheme $X_{k+1}=g(X_k)$ for evaluating any of the given functions at an automorphism $A\in\mathbb{G}$, we know that it converges to a limit $X=\lim_{k\rightarrow\infty}X_k$ that shares the automorphism structure of $A$. However, this asymptotic behaviour does in general not guarantee convergence inside the automorphism group. That is, we generally have 
\begin{equation*}
    A=X_0\in\mathbb{G}, \quad \lim_{k\rightarrow \infty}X_k=F(X_0)\in\mathbb{G},\quad \textbf{but}\quad X_k\notin \mathbb{G},
\end{equation*}
as long as $X_k$ is not sufficiently close to the limit. In \Cref{sec:pade}, we establish a class of iterative schemes that is able to preserve the automorphism structure throughout the iteration for every iterate and any of the functions. That is, given an automorphism $A\in\mathbb{G}$, we have $X_k\in\mathbb{G}$, for every $k\geq 0$. One can see that this property is not satisfied for the quadratic schemes \eqref{eq:newtonsqrt}-\eqref{eq:DBI} by taking a unitary matrix $A\in\mathrm{U}_n(\mathbb{C})$ and computing its square root using \eqref{eq:newtonsqrt}. The first Newton iterate then reads
\begin{equation*}
X_1=\frac{1}{2}(X_0+X_0^{-1}A)=\frac{1}{2}(A+A^{-1}A)=\frac{1}{2}(A+I_n),
\end{equation*} 
which is in general not unitary since
\begin{equation*}
X_1^HX_1=\frac{1}{4}(A^H+I_n)(A+I_n)=\frac{1}{4}(I_n + A^H + A + I_n)\neq I_n.
\end{equation*}
\subsubsection{The family of Padé iterations}\label{sec:pade}
A family of iterative schemes that is capable of preserving the structure is the family of Padé iterations, which was first introduced by Kenney and Laub \cite{rationalsign} for the matrix sign. Recall that a rational approximation $r_{\ell m}=\frac{p_{\ell}(x)}{q_{m}(x)}$ to a scalar function $f(x)$ is called an [$\ell /m$] Padé approximant \cite[Sec. 4.4]{matrixfunctions}, if $p_{\ell}$ and $q_{m}$ are polynomials of degree $\ell$ and $m$, respectively, $q_{m}(0)=1$ and 
\begin{equation*}
f(x) - r_{\ell m}(x)=\mathcal{O}\left(x^{\ell+m+1}\right).
\end{equation*}  
For the sign, the use of Padé approximants is motivated by the fact that, in the scalar case, \eqref{eq:sqrtidentities} can be rewritten as
\begin{equation}
\mathrm{sign}(x)=\frac{x}{(x^2)^{1/2}}=\frac{x}{\left(1-(1-x^2)\right)^{1/2}}=\frac{x}{(1-\xi)^{1/2}},
\end{equation}
where $\xi=1-x^2$. Now, based on an [$\ell /m$] Padé approximation $r_{\ell m}(\xi)=\frac{p_{\ell}(\xi)}{q_{m}(\xi)}$ to $h(\xi)=(1-\xi)^{-1/2}$, let us consider the iterative scheme
\begin{equation}
x_{k+1}=x_kr_{\ell m}(1-x_k^2)=x_k p_{\ell}(1-x_k^2) q_{m}(1-x_k^2)^{-1} \label{eq:padesignscalar}
\end{equation}
for computing the sign of a complex scalar. Kenney and Laub have given the explicit formulae for $p_{\ell}$ and $q_{m}$ for $\ell ,m\leq 3$ and proved convergence rates of \eqref{eq:padesignscalar} depending on $\ell$ and $m$ \cite[Sec. 3]{rationalsign}. They proposed to extend the iterative scheme \eqref{eq:padesignscalar} directly to matrices \cite[Sec. 5]{rationalsign}, yielding the Padé sign iteration
\begin{equation}
X_{k+1}=X_kr_{\ell m}(I_n-X_k^2),\quad X_0=A. \label{eq:padesignmatrix}
\end{equation}
Note that the evaluation of $r_{\ell m}(I_n-X^2)$ does require evaluating both $p_\ell(I_n-X^2)$ and $q_m(I_n-X^2)$ and solving the matrix equation 
\begin{equation}
r_{\ell m}(I_n-X^2)q_{m}(I_n-X^2) =p_{\ell}(I_n-X^2) \label{eq:computepade}
\end{equation}
afterwards. The evaluation of polynomials and rational functions is a field of research and a variety of algorithms for this task have been established \cite[Sec. 4]{matrixfunctions}.\\   
The Padé scheme \eqref{eq:padesignmatrix} has three main advantages over the quadratic iteration \eqref{eq:newtonsign}:
\begin{enumerate}
\item For $\ell =m$ or $\ell =m-1$, the Padé scheme \eqref{eq:padesignmatrix} converges with order of convergence $l+m+1$ \cite{rationalsign}.
\item The Padé approximant $r_{\ell m}(X_k)$ can be evaluated efficiently without explicitly computing $p_{\ell}(X_k)$ and $q_{m}(X_k)$ in every step \cite[Sec. 4.4]{matrixfunctions}.
\item The Padé scheme is structure preserving for $\ell =m$ and $m\geq1$. That is, given $A\in\mathbb{G}$, we have $X_k\in\mathbb{G}$, for all $k\geq0$ \cite{signpolar}.
\end{enumerate}
Using the results from \Cref{sec:connection}, one can obtain a Padé iteration that is similar to \eqref{eq:DBI} in the sense that its iterates converge to $A^{1/2}$ and $A^{-1/2}$ simultaneously. This can be done by applying \eqref{eq:padesignmatrix} to the special block matrix \eqref{eq:DBIinit}. Evaluating the blocks seperately yields the Padé square root iteration \cite[Thm. 4.5]{structurepreserving}
\begin{equation}
\begin{array}{lll}
Y_{k+1}=Y_kr_{\ell m}(I-Z_kY_k), &Y_0=A,\\
Z_{k+1}=r_{\ell m}(I-Z_kY_k)Z_k, &Z_0=I,
\end{array}
\label{eq:padesqrt}
\end{equation}
where $Y_k\rightarrow A^{1/2}$ and $Z_k\rightarrow A^{-1/2}$. Using $X_k^HX_k$ instead of $X_k^2$ in \eqref{eq:padesignmatrix}, one obtains the Padé iteration
\begin{equation}
X_{k+1}=X_kr_{\ell m}(I_n-X_k^HX_k),\quad X_0=A, \label{eq:padepolar}
\end{equation}
that converges to $Q=\mathcal{P}(A)$ and shares the same properties concerning convergence speed and structure preservation \cite[Thm. 2.2]{signpolar}. Note that \eqref{eq:padepolar} is applicable to both square and rectangular matrices as only $X_k^HX_k$ is required to be quadratic.\\
The most important results on the family of Padé iterations are summarized in the following Lemma:
\begin{lemma}[Padé iterations]\label{lem:pade}
Let $A\in\mathbb{C}^{n,n}$ be an automorphism, $A\in\mathbb{G}$. Then, for $\ell=m$ and $\ell\geq 1$, we have:
\begin{itemize}
\item[(i)] If $A$ has no purely imaginary eigenvalues, then $\lim_{k\rightarrow\infty} X_k=\mathrm{sign}(A)$ in \eqref{eq:padesignmatrix} and $X_k\in\mathbb{G}$, for all $k\geq0$.
\item[(ii)] If $A$ has no eigenvalues on $\mathbb{R}^-$, then 
$$\lim_{k\rightarrow\infty} \begin{bmatrix}Y_k\\Z_k\end{bmatrix}=\begin{bmatrix}A^{1/2}\\A^{-1/2}\end{bmatrix}$$ in \eqref{eq:padesqrt} and $Y_k,Z_k\in\mathbb{G}$, for all $k\geq0$.
\item[(iii)] If $A$ is nonsingular, then $\lim_{k\rightarrow\infty} X_k=\mathcal{P}(A)$ in \eqref{eq:padepolar} and $X_k\in\mathbb{G}$, for all $k\geq0$.
\end{itemize} 
Furthermore, all iterations converge with order of convergence $2\ell+1$.
\end{lemma}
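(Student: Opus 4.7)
The plan is to establish (i) first and then deduce (ii) and (iii) by applying the sign iteration to the block matrices from \Cref{sec:connection}. Each part splits into three subclaims: convergence to the stated limit, the order $2\ell+1$, and automorphism preservation $X_k\in\mathbb{G}$ (or $Y_k,Z_k\in\mathbb{G}$) for every $k$, not just asymptotically.

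For (i), I would pass to a Jordan canonical form $A=ZJZ^{-1}$: since \eqref{eq:padesignmatrix} is built from polynomial and rational operations on $X_k$, the similarity is preserved throughout and the analysis reduces to the scalar Padé iteration $x_{k+1}=x_kr_{\ell\ell}(1-x_k^2)$ on the eigenvalues of $A$, with nontrivial Jordan blocks handled by the matrix functional calculus. Convergence of the scalar iteration to $\mathrm{sign}(x_0)$ for non-imaginary $x_0$, together with the order $2\ell+1$, is the content of \cite[Sec.~3]{rationalsign}; the order follows from the defining Padé error $(1-\xi)^{-1/2}-r_{\ell\ell}(\xi)=\mathcal{O}(\xi^{2\ell+1})$ combined with the fact that $\xi_k=1-x_k^2$ vanishes to first order in the error $e_k=x_k-\mathrm{sign}(x_0)$.

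Parts (ii) and (iii) then follow from the block-matrix identities of \Cref{sec:connection}. For (ii), set $\hat{X}_k=\bigl[\begin{smallmatrix}0&Y_k\\Z_k&0\end{smallmatrix}\bigr]$ with $\hat{X}_0=\bigl[\begin{smallmatrix}0&A\\I&0\end{smallmatrix}\bigr]$. A direct computation yields $\hat{X}_k^2=\mathrm{diag}(Y_kZ_k,Z_kY_k)$, so $r_{\ell\ell}(I-\hat{X}_k^2)$ is block diagonal and $\hat{X}_{k+1}=\hat{X}_kr_{\ell\ell}(I-\hat{X}_k^2)$ retains the antidiagonal form; reading off the $(1,2)$ and $(2,1)$ blocks reproduces \eqref{eq:padesqrt}. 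By \Cref{rem:signpolarsqrt} the hypothesis on $A$ translates into $\hat{X}_0$ having no purely imaginary eigenvalues, so (i) supplies the limit and the order. Part (iii) is analogous with $\hat{X}_0=\bigl[\begin{smallmatrix}0&A\\A^H&0\end{smallmatrix}\bigr]$, using the commutation $X^Hp(XX^H)=p(X^HX)X^H$ to identify the off-diagonal blocks.

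The main technical obstacle is the per-iterate automorphism claim. I would argue by induction on $k$, with the base case immediate from $A,I\in\mathbb{G}$. The algebraic engine is the reciprocity identity $r_{\ell\ell}(1-y)r_{\ell\ell}(1-1/y)=1$, equivalently $g(x)g(1/x)=1$ for $g(x)=xr_{\ell\ell}(1-x^2)$, a classical property of the $[\ell/\ell]$ Padé approximants to $(1-\xi)^{-1/2}$ arising from their palindromic coefficient structure. For (i), using $X_k^{\star}=MX_k^{-1}M^{-1}$ from $X_k\in\mathbb{G}$ (with $\star\in\{T,H\}$) and the real coefficients of $g$, one computes $g(X_k)^{\star}=Mg(X_k^{-1})M^{-1}$ and hence $X_{k+1}^{\star}MX_{k+1}=Mg(X_k^{-1})g(X_k)=M$. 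For (ii) and (iii) the argument reduces to showing that $W\in\{Z_kY_k,X_k^HX_k\}$ satisfies $M^{-1}WM=W^{-1}$, which follows from the automorphism property combined with the algebraic specifics of $M\in\{J_{2n},\Sigma_{p,q},R_n\}$ (notably $M^2\in\{\pm I\}$); the reciprocity identity then yields $r_{\ell\ell}(I-W)^{\star}Mr_{\ell\ell}(I-W)=M$ and closes the induction. The complete verifications are recorded in \cite{signpolar,structurepreserving}.
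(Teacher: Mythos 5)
The paper does not actually prove this lemma: its ``proof'' consists solely of the citations, Theorem~2.2 of \cite{signpolar} for (i),(iii) and Theorem~4.5 of \cite{structurepreserving} for (ii). What you have done is reconstruct the argument inside those references, and the three ingredients you isolate---Jordan-form reduction to the scalar Kenney--Laub analysis \cite{rationalsign} for convergence and order $2\ell+1$, the antidiagonal block-matrix construction of \Cref{sec:connection} to transfer results from the sign iteration to the square-root and polar iterations, and the reciprocity identity $r_{\ell\ell}(1-y)\,r_{\ell\ell}(1-1/y)=1$ (equivalently $g(x)g(1/x)=1$ for $g(x)=xr_{\ell\ell}(1-x^2)$) as the engine of per-iterate group membership---are indeed the ones used there, so your route is a faithful reconstruction rather than a genuinely different proof; what it buys over the paper's bare citation is an explicit account of where the hypothesis $\ell=m$ enters (it is precisely what makes $g$ reciprocal).

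One real misstatement in your final paragraph: the condition $M^{-1}WM=W^{-1}$ is equivalent to ``$W\in\mathbb{G}$ and $W^{\star}=W$.'' That is fine for $W=X_k^HX_k$ in (iii), which is Hermitian, but $W=Z_kY_k$ in (ii) is not normal in general, so the identity you wrote can fail even though $Z_kY_k\in\mathbb{G}$. The statement you actually need, and which closes the induction in both cases, is the weaker $W^{\star}MW=M$, i.e.\ $W\in\mathbb{G}$: for (ii) this holds because $\mathbb{G}$ is a multiplicative group and $Y_k,Z_k\in\mathbb{G}$ by the inductive hypothesis, with no appeal to $M^2=\pm I$; for (iii) it holds because $M^2=\pm I$ forces $X_k^H\in\mathbb{G}$ whenever $X_k\in\mathbb{G}$. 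From $W^{\star}=MW^{-1}M^{-1}$ and the real coefficients of $r_{\ell\ell}$ one gets $r_{\ell\ell}(I-W)^{\star}=M\,r_{\ell\ell}(I-W^{-1})\,M^{-1}$, and the reciprocity identity then yields $r_{\ell\ell}(I-W)^{\star}M\,r_{\ell\ell}(I-W)=M$ exactly as you intend.
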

\begin{proof}
See \cite[Thm. 2.2]{signpolar} for (i),(iii) and \cite[Thm. 4.5]{structurepreserving} for (ii).
\end{proof}
\subsection{Computing the Fréchet derivative of the square root, sign and polar function}
In some applications, one is not only interested in evaluating a matrix function $F$ at a given point $X$, but also in computing the Fréchet derivative $L_F(X,E)$ at that point in a given direction. For this purpose, one is in need of an appropriate procedure for simultaneously computing $F(X)$ and $L_F(X,E)$ in an efficient way. In this section, we want to introduce a direct approach based on the connections between the sign, polar and square root function that we observed in \Cref{sec:connection} and an iterative approach that is based on the Newton- and Padé-schemes introduced in \Cref{sec:quadnewton} and \Cref{sec:pade}. 
\subsubsection{Direct approaches}\label{sec:direct}
For the square root function, it is well known that the Fréchet derivative at $A$ in direction $E$ can be computed by solving the Sylvester equation
\begin{equation}
	A^{1/2}X + XA^{1/2}=E \label{eq:frechetsqrt}
\end{equation}
for $X$ \cite{sqrtsecondfrechet,matrixfunctions}. We now try to present a similar direct approach for computing the Fréchet derivatives of the sign and polar function using \eqref{eq:sqrtidentities} and the common differentiation rules. Note that we use the transpose instead of the conjugate transpose to derive the direct approach for the polar function again since the conjugate transpose is not Fréchet differentiable. However, the formula obtained from our straightforward computations can be modified to make it applicable to complex matrices using the conjugate transpose instead of the transpose in what follows \cite[Sec. 5]{polarfrechet}.\\
Recalling \eqref{eq:sqrtidentities} and applying the product rule, one can see that the Fréchet derivatives of the sign and polar function have the form 
\begin{equation*}
\begin{array}{lll}
L_{\mathrm{sign}}(A,E)&=E\left(A^2\right)^{-1/2}&+AL_{(X^2)^{-1/2}}(A,E),\\
L_{\mathcal{P}}(A,E)&=E\left(A^TA\right)^{-1/2}&+AL_{(X^TX)^{-1/2}}(A,E).
\end{array}
\end{equation*}
As a consequence, the main task is to compute the deri-vative of the composed function $f(X)=(X^2)^{-1/2}$ for the sign or $g(X)=(X^TX)^{-1/2}$ for the polar function, which again leads to solving a Sylvester equation of the type
\begin{equation}
B(A)^{1/2}X+XB(A)^{1/2}=-C(A,E),\label{eq:frechetsylvester}
\end{equation}
where the functions
\begin{equation*}
B(A)=\begin{cases}\left(A^2\right)^{-1}, & \mathrm{sign} \\
			\left(A^TA\right)^{-1}, & \mathcal{P} \end{cases}
\end{equation*} 
and 
\begin{equation*}
C(A,E)=\begin{cases}B(A)(AE+EA)B(A), & \mathrm{sign}\\
			B(A)\left(A^TE + E^TA\right)B(A), & \mathcal{P} \end{cases}
\end{equation*} 
can be obtained by applying the chain rule to $f(X)$ and $g(X)$. This method can be used to derive formulae for higher derivatives by repeatedly differentiating \eqref{eq:sqrtidentities}. However, the amount of Sylvester equations to solve increases significantly (about $2^{k-1}$ equations for the $k$-th derivative, if different directions $E_1,\dots,E_k$ are used), making it a costly approach. 
\subsubsection{Coupled Iterations}\label{sec:coupled}
Since the direct approach introduced before is fairly expensive for large matrices and the Fréchet derivative is usually not required up to full accuracy in most applications, we will now discuss an inexact, iterative procedure for evaluating a matrix function and its Fréchet derivative simultaneously.\\
It is due to Gawlik and Leok \cite{polarfrechet} that we have a comprehensive framework that works for all three functions and every iterative scheme introduced in this article. The main idea is the following: Suppose we have a Fréchet differentiable matrix function $F:\mathbb{C}^{m,n}\rightarrow\mathbb{C}^{m,n}$ and an iterative scheme $X_{k+1}=g(X_k)$ induced by a Fréchet differentiable update function $g:\mathbb{C}^{m,n}\rightarrow \mathbb{C}^{m,n}$ which we can use for evaluating $F$ at a given point $A$. Then, if the Fréchet derivative $L_g(X,E)$ is known explicitly, we can consider the coupled iteration 
\begin{equation}
\begin{array}{ll}
X_{k+1}=g(X_k), &X_0=A,\\
E_{k+1}=L_g(X_k,E_k), &E_0=E,
\end{array}\label{eq:generalcoupled}
\end{equation}
to obtain iterates $X_k$ and $E_k$ simultaneously. The coupled scheme \eqref{eq:generalcoupled} can be shown to be convergent to $F(A)$ and $L_F(A,E)$, respectively, if differentiation and limit formation do commute for $g$ \cite[Sec. 1]{polarfrechet}.\\
For the quadratic iterations \eqref{eq:newtonsqrt}, \eqref{eq:newtonsign}, \eqref{eq:newtonpolarsquare} and \eqref{eq:newtonpolar}, this property is fulfilled and one can obtain the iteration
\begin{equation}
\begin{array}{ll}
X_{k+1} = \frac{1}{2}\left(X_k + X_k^{-1}\right), &X_0=A,\\
E_{k+1} = \frac{1}{2}\left(E_k - X_k^{-1}E_kX_k^{-1}\right), &E_0=E,
\end{array} \label{eq:newtonfrechetsign}
\end{equation}
for the matrix sign \cite[Thm. 5.6]{matrixfunctions} and similar schemes for the square root iterations \eqref{eq:newtonsqrt}, \eqref{eq:DBI} and the real polar iterations \eqref{eq:newtonpolarsquarereal}, \eqref{eq:newtonpolarreal}. We point out that $A$ has to be considered as $X_0$ and differentiated in direction $E=E_0$ in \eqref{eq:newtonsqrt} to obtain a suitable iteration for $A^{1/2}$ and $L_{X^{1/2}}(A,E)$. The complex polar iterations \eqref{eq:newtonpolarsquare} and \eqref{eq:newtonpolar} again have to be considered seperately, which will be briefly discussed at the end of this section.\\
Looking again at the Padé family of iterations, one can verify that for a Padé approximant $h(X)=r_{\ell m}(I-X^2)=p_{\ell}(I-X^2)q_{m}(I-X^2)^{-1}$, $h$ can be differentiated using the chain and product rule to obtain
\begin{eqnarray}
L_h(X,E)&=&L_{r_{\ell m}}\left(I-X^2,-(XE+EX)\right)\label{eq:frechetpade}\\
&=& \begin{aligned}[t] &L_{p_{\ell}}\left(I-X^2,-(XE+EX)\right)q_{m}(I-X^2)^{-1}\\
		&-r_{\ell m}(I-X^2)L_{q_{m}}\left(I-X^2,-(XE+EX)\right)\\
		& \cdot q_{m}(I-X^2)^{-1}.\end{aligned}\notag 
\end{eqnarray}
Using this result, the coupled Padé-iteration 
\begin{equation}
\begin{array}{ll}
X_{k+1} = X_k h(X_k), &X_0=A,\\
E_{k+1} = E_k h(X_k) + X_k L_h(X_k,E_k), &E_0=E,
\end{array} \label{eq:newtonfrechetpadesign}
\end{equation}
converges to $\mathrm{sign}(A)$ and $L_{\mathrm{sign}}(A,E)$, respectively. Similarly, using $\widetilde{h}(X)=r_{\ell m}(I_n - X^TX)$ instead of $h$ and forming the derivative in \eqref{eq:frechetpade} accordingly yields coupled iterations for the polar mapping $\mathcal{P}(A)$  of a real matrix $A$ and its derivative\cite[Thm. 1]{polarfrechet}. Furthermore, using \eqref{eq:newtonfrechetpadesign}, there is again a coupled iteration for the square root and inverse square root as well as their derivatives that is based on \eqref{eq:padesqrt}.\\
Now recall the iterations involving the conjugate transpose \eqref{eq:newtonpolarsquare}, \eqref{eq:newtonpolar} and \eqref{eq:padepolar} for the computation of $\mathcal{P}(A)$ for complex matrices $A$. As mentioned before, the mapping $X\mapsto X^H$ is not Fréchet differentiable, thus making the update function $g$ not Fréchet differentiable for any of the three iterative schemes. However, it can be shown that coupled iterative schemes exist in the case of complex matrices and can be obtained by taking the real iterations and replacing every transpose by a conjugate transpose. For the quadratic schemes resulting from \eqref{eq:newtonpolarsquare} and \eqref{eq:newtonpolar}, convergence can be proven for any $A$ having full rank, while in the case of the Padé schemes \eqref{eq:padepolar}, convergence can be shown under additional assumptions \cite{polarfrechet}.\\
For the remainder of this paper, our further investigation requires all matrices $A$ and $E$ to be real. As a consequence, it suffices to consider the polar-iterations involving the transpose from now on. 
\section{The Complex Step}
\subsection{Motivation and advantages}
In most practical applications, the function of interest is either too difficult to differentiate by hand or there is not even a closed form representation for $f$ as only its effect $f(A)$ is given for a sample of points $A$. In those cases, being able to approximate the function $f$ at a given point $A$ and its (Fréchet) derivative $f'(A)$ ($L_f(A,E)$, respectively) efficiently and precisely is a major task in numerical analysis \cite{applic1, applic3, applic2}.\\
The easiest approach is to first evaluate $f$ at the desired point $A\in\mathbb{R}^{m,n}$ and then use the finite difference approximation 
\begin{equation}
L_f(A,E)\approx\begin{cases}\frac{f(A+hE)-f(A)}{h} &\text{(forward differences)}\\
							\frac{f(A+hE)-f(A-hE)}{2h} &\text{(central differences)}\end{cases} \label{eq:finite}
\end{equation}
to approximate the corresponding Fréchet derivative in direction of $E\in\mathbb{R}^{m,n}$. The main drawback of this approach is that \eqref{eq:finite} suffers from cancellation errors due to subtraction in the numerator as $h$ tends to zero. As a consequence, $h$ can usually not be chosen siginificantly smaller than the square root of the unit roundoff and the approximation quality is thus limited by the error that is achieved choosing $h$ numerically optimal \cite{finitedifference2, finitedifference1}.\\
A different approach to numerically approximate derivatives, that was first used by Lyness and Moler \cite{CSmoler}, is to exploit complex arithmetics for $f$ to avoid cancellation. The idea of this so-called \textit{Complex Step approach} (CS) is to inspect the complex Taylor expansion  
\begin{equation}
f(x+ih)=f(x)+ihf'(x)-\frac{h^2}{2}f''(x)-\frac{ih^3}{6}f'''(x)+\mathcal{O}(h^4)
\end{equation}
for an analytic function $f$ and observe, that splitting real and imaginary parts yields the approximations 
\begin{align}
f(x)&= \mathrm{Re}\left(f(x+ih)\right)+\mathcal{O}(h^2), \notag\\
f'(x)&=\mathrm{Im}\left(\frac{f(x+ih)}{h}\right)+\mathcal{O}(h^2), \label{eq:CSscalar}
\end{align}
that do not suffer from numerical issues. This idea can now be directly transferred to Fréchet-differentiable matrix functions given a direction matrix $E$ to obtain the approximation \cite{complexstep}
\begin{equation}
L_f(A,E)=\mathrm{Im}\left(\frac{f(A+ihE)}{h}\right)+\mathcal{O}(h^2).\label{eq:CSmatrix}
\end{equation}
Unlike in \eqref{eq:finite}, $h$ can be chosen as small as needed to achieve the desired approximation accuracy in \eqref{eq:CSscalar} and \eqref{eq:CSmatrix}, even making choices like $h=10^{-100}$ applicable if needed \cite{CSNPL}.  
Nonetheless, there are three difficulties to be aware of.\\
First of all, an arbitrarily small choice of $h$ is only possible, if $A$ and $E$ are real, $f$ is real if evaluated at a real argument and no complex arithmetic is exploited to evaluate $f$. Else, the CS behaves similar to other second order methods, such as the central difference approximation from \eqref{eq:finite} in the sense that $h$ can usually not be chosen smaller than the square root of the unit roundoff \cite[Fig. 7.1]{complexstep}.\\
Second, evaluating $f$ at the complex argument $A+ihE$ might be computationally expensive compared to evaluating $f$ at a real argument twice ($f(A+hE)$ and $f(A)$ or $f(A-hE)$) since complex operations are more costly than real operations. However, seperating the real and imaginary parts for the operations, there are possibilities for parallelization \cite{complexmatrix, complexinverse}.\\
The third point is that the CS approximation, in theory, requires $f$ to be an analytic function to be able to argue based on the complex Taylor expansion. However, this property was shown to be sufficient, but not necessary, as we will see in subsequent sections \cite{complexstep}. Considering the three functions introduced in \Cref{sec:functions}, only the square root (and its inverse) are analytical and, as such, compatible with the CS. For the sign and polar function, we will show in the following chapter that the CS is applicable as well.
\subsection{Combining the CS and Newton's method}\label{sec:newtonCS}
The approach to use the Complex Step in combination with Newtons method was first proposed by Al-Mohy and Higham\cite{complexstep} for the matrix sign using the quadratic Newton sign iteration \eqref{eq:newtonsign}. The main idea is to explore the convergence behaviour of the complex iteration
\begin{equation}
\widehat{X}_{k+1}=g\left(\widehat{X}_k\right),\quad \widehat{X}_0 = A+ihE, \label{eq:CSgeneral}
\end{equation}
for evaluating the non-analytic function $f$ at $\widehat{X}_0$, given an analytic update function $g(X)$ and real matrices $A$ and $E$.\\ 
Al-Mohy and Higham observed the following result for the matrix sign function \cite[Thm. 5.1]{complexstep}:
\begin{theorem}[The CS for the sign function]\label{thm:CSsign}
Let $A,E\in\mathbb{R}^{n,n}$ with $A$ having no purely imaginary eigenvalues. Consider the CS-iteration (cf. \eqref{eq:newtonsign})
\begin{equation}
\widehat{X}_{k+1}=\frac{1}{2}\left(\widehat{X}_k+\widehat{X}_k^{-1}\right),\quad \widehat{X}_0=A+ihE. \label{eq:CSsign} 
\end{equation}
Then, for any $h$ being sufficiently small, the iterates $\widehat{X}_k\in\mathbb{C}^{n,n}$ are nonsingular. Moreover, the following hold:
\begin{align}
&\mathrm{Re}\left(\mathrm{sign}(A+ihE)\right)=\lim_{k\rightarrow\infty}\mathrm{Re}\left(\widehat{X}_k\right)=\mathrm{sign}(A)+\mathcal{O}(h^2)\notag\\
&\mathrm{Im}\Biggl(\frac{\mathrm{sign}(A+ihE)}{h}\Biggr)=\lim_{k\rightarrow\infty}\mathrm{Im}\Biggl(\frac{\widehat{X}_k}{h}\Biggr)=L_{\mathrm{sign}}(A,E)+\mathcal{O}(h^2)\label{eq:CSnonanalytic}
\end{align}
\end{theorem}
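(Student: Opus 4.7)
The plan is to reduce the theorem to two classical facts: the convergence of the (standard, complex-valued) Newton sign iteration, and a Taylor expansion of the sign function along the analytic curve $h\mapsto A+ihE$. First I would establish that $A+ihE$ has no purely imaginary eigenvalues for all sufficiently small $h$. Since eigenvalues depend continuously on the matrix entries and the imaginary axis is closed while $\sigma(A)$ is disjoint from it by assumption, there exists $h_0>0$ such that $\sigma(A+ihE)\cap i\mathbb{R}=\emptyset$ for every $|h|<h_0$. This guarantees that $\mathrm{sign}(A+ihE)$ is well defined on $(-h_0,h_0)$.

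Next I would observe that iteration \eqref{eq:CSsign} is literally the Newton sign iteration \eqref{eq:newtonsign} applied to the complex starting matrix $\widehat{X}_0=A+ihE$, with no realness assumption used in the derivation of \eqref{eq:newtonsign}. Therefore, by the known convergence theorem \cite[Thm.\ 5.6]{matrixfunctions} combined with the previous step, all iterates $\widehat{X}_k$ are nonsingular and $\widehat{X}_k\to\mathrm{sign}(A+ihE)$ quadratically. This gives the first equality in each of the two displayed identities of the theorem.

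For the second equality in each identity I would use the analyticity of the matrix sign function on the set of matrices without purely imaginary eigenvalues (see \Cref{rem:signpolarsqrt}). Composing with the entire function $h\mapsto A+ihE$, the map $h\mapsto\mathrm{sign}(A+ihE)$ is real-analytic on $(-h_0,h_0)$, so it admits the Taylor expansion
\begin{equation*}
\mathrm{sign}(A+ihE)=\mathrm{sign}(A)+ih\,L_{\mathrm{sign}}(A,E)-\tfrac{h^2}{2}L^{[2]}_{\mathrm{sign}}(A;E,E)+\mathcal{O}(h^3).
\end{equation*}
Because $A$ and $E$ are real and $\mathrm{sign}$ maps real matrices without imaginary eigenvalues into real matrices, each coefficient $\mathrm{sign}(A)$, $L_{\mathrm{sign}}(A,E)$, $L^{[2]}_{\mathrm{sign}}(A;E,E)$ is itself a real matrix. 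Splitting the expansion into real and imaginary parts and dividing the imaginary part by $h$ gives the two remaining equalities with the stated $\mathcal{O}(h^2)$ error.

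The main obstacle here is not the algebra but the justification of a true second-order (hence $\mathcal{O}(h^2)$) remainder rather than the $o(h)$ bound that the mere definition of the Fréchet derivative would provide. This is where the analyticity of $\mathrm{sign}$ from \Cref{rem:signpolarsqrt} is essential: it supplies a genuine convergent power series expansion in $h$ with bounded higher-order coefficients on any compact interval $[-h_1,h_1]\subset(-h_0,h_0)$, and it also justifies swapping the order in $L_{\mathrm{sign}}(A,ihE)=ih\,L_{\mathrm{sign}}(A,E)$, which would otherwise require $\mathbb{C}$-linearity of the Fréchet derivative that is not automatic for a map only assumed to be real-Fréchet-differentiable.
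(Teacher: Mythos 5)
Your approach differs fundamentally from the paper's. The paper (following Al-Mohy and Higham, and recapitulated in the discussion surrounding \eqref{eq:strongIV} and in the proof of \Cref{thm:CSpade}) proves the per-iterate identity $\mathrm{Re}(\widehat{X}_k)=X_k+\mathcal{O}(h^2)$, $\mathrm{Im}(\widehat{X}_k)=hE_k+\mathcal{O}(h^3)$ by induction on $k$, where $X_k,E_k$ are the iterates of the coupled scheme \eqref{eq:newtonfrechetsign}; the limiting statements then fall out by passing to the limit. You instead prove convergence of the complex Newton iteration to $\mathrm{sign}(A+ihE)$ and Taylor-expand $\mathrm{sign}$ along $h\mapsto A+ihE$. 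Your route is shorter for the limit statement alone, but it does not deliver \eqref{eq:strongIV}, which is precisely what the paper extracts from this theorem and generalizes in \Cref{thm:CSpade}: the per-iterate comparison is what transfers approximate structure preservation through \emph{every} Padé iterate, not just the limit. So even granting correctness, your proof yields strictly less than the paper's.

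There is also a genuine gap in the justification. You invoke "the analyticity of the matrix sign function ... (see \Cref{rem:signpolarsqrt})," but \Cref{rem:signpolarsqrt} asserts only \emph{continuity} of the sign and polar maps, not analyticity. Worse, the paper explicitly states in the paragraph preceding \Cref{sec:newtonCS} that "only the square root (and its inverse) are analytical" and that the sign and polar cases must therefore be handled differently. Whether one agrees with that characterization is a separate question — one can indeed argue that $\mathrm{sign}(A)=A(A^2)^{-1/2}$ is a composition of the polynomial map $A\mapsto A^2$ with the $\mathbb{C}$-holomorphic inverse square root, so that $\mathrm{sign}$ is holomorphic on matrices with no purely imaginary eigenvalues, and the $\mathbb{C}$-linearity $L_{\mathrm{sign}}(A,ihE)=ih\,L_{\mathrm{sign}}(A,E)$ follows — but you cannot cite the paper for that fact, because the paper denies it. As written, the central analytic step of your proof rests on a reference that says something weaker than you need and on a premise the paper contradicts; you would have to supply your own holomorphicity argument (composition or contour-integral representation) before the Taylor expansion, and in particular the genuine $\mathcal{O}(h^2)$ remainder, is justified.
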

\noindent 
\Cref{thm:CSsign} and \eqref{eq:CSnonanalytic} in particular show, that it is not necessary to have an analytic function for the CS to work. For us, the fact that the CS is applicable to the matrix sign function if done correctly is of special interest as well. To state and prove these observations, it suffices to analyze the asymptotic behaviour of the quadratic CS-sign-iteration. However, in their proof, Al-Mohy and Higham show that for the CS-iterates $\widehat{X}_k$, the more general result  
\begin{equation}
\mathrm{Re}\left(\widehat{X}_k\right)=X_k+\mathcal{O}(h^2)\text{ }\text{  and  }\text{ }\mathrm{Im}\left(\widehat{X}_k\right)=hE_k + \mathcal{O}(h^3) \label{eq:strongIV}
\end{equation}
holds for every $k\geq 0$, where $X_k$ and $E_k$ are the iterates generated by the coupled Newton iteration \eqref{eq:newtonfrechetsign} initialized with $X_0=A$ and $E_0=E$. Since this coupled scheme is known to converge to $\mathrm{sign}(A)$ and $L_{\mathrm{sign}}(A,E)$, respectively, the asymptotic convergence of \eqref{eq:CSgeneral} follows immediately for suitable $h$ \cite{matrixfunctions}.\\
Now taking a closer look at \eqref{eq:strongIV}, this property in fact reveals equality of the iterates from the coupled iteration \eqref{eq:newtonfrechetsign} and the corresponding CS iteration \eqref{eq:CSgeneral}, if one seperates the real and imaginary parts (up to order of $h^2$). The benefit of \eqref{eq:strongIV} over \eqref{eq:CSnonanalytic} becomes clear when we consider iterative schemes with additional features, such as structure preservation properties, since \eqref{eq:strongIV} states that this property is approximately inherited by the CS-iteration (up to order $h^2$). This topic will be discussed for the family of Padé iterations in \Cref{sec:CSpade}.\\
Before we get to the Padé iterations, let us extend the CS idea to the quadratic polar iteration \eqref{eq:newtonpolarreal}, the square root iteration \eqref{eq:newtonsqrt} and the DB-iteration \eqref{eq:DBI}.
\begin{theorem}[The CS for the Polar function] \label{thm:CSpolar}
Let $A,E\in\mathbb{R}^{m,n}$ with $\mathrm{rank}(A)=n\geq m$. Consider the CS-iteration (cf. \eqref{eq:newtonpolarreal})
\begin{equation}
\widehat{X}_{k+1}=\frac{1}{2}\widehat{X}_k\left(I_n + \left(\widehat{X}_k^T\widehat{X}_k\right)^{-1}\right),\quad \widehat{X}_0=A+ihE. \label{eq:CSpolar}
\end{equation}
Then, for any $h$ being sufficiently small, the iterates $\widehat{X}_k\in\mathbb{C}^{m,n}$ have full rank. Moreover, the following hold:
\begin{align*}
&\mathrm{Re}\left(\mathcal{P}(A+ihE)\right)=\lim_{k\rightarrow\infty}\mathrm{Re}\left(\widehat{X}_k\right)=\mathcal{P}(A)+\mathcal{O}(h^2)\tag{i}\\
&\mathrm{Im}\left(\frac{\mathcal{P}(A+ihE)}{h}\right)=\lim_{k\rightarrow\infty}\mathrm{Im}\left(\frac{\widehat{X}_k}{h}\right)=L_{\mathcal{P}}(A,E)+\mathcal{O}(h^2) \tag{ii}
\end{align*}
\end{theorem}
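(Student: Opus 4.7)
The plan is to mirror the Al-Mohy/Higham strategy used for \Cref{thm:CSsign}, i.e.\ to establish the stronger relation
\begin{equation*}
\widehat{X}_k = X_k + ihE_k + \mathcal{O}(h^2), \quad k\geq 0,
\end{equation*}
where $\{X_k\}$ and $\{E_k\}$ are the iterates of the coupled Newton scheme obtained by linearising \eqref{eq:newtonpolarreal} in the sense of \Cref{sec:coupled}, initialised with $X_0=A$, $E_0=E$. Combined with the known convergence $X_k\to\mathcal{P}(A)$ and $E_k\to L_\mathcal{P}(A,E)$, this yields the asserted limits up to $\mathcal{O}(h^2)$. The second equality in each line then follows by invoking the scalar/matrix Complex Step expansion \eqref{eq:CSmatrix} applied to the analytic extension $\mathcal{P}(Z):=Z(Z^TZ)^{-1/2}$, which is analytic in a neighbourhood of $A$ since $A^TA$ is Hermitian positive definite and hence has no eigenvalues on $\mathbb{R}^-$.

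I would proceed in three steps. First, preservation of full rank: $A$ has full column rank, so by continuity $\widehat{X}_0=A+ihE$ has full rank for $h$ sufficiently small, and the inductive closeness $\widehat{X}_k=X_k+\mathcal{O}(h)$ together with full rank of $X_k$ (inherited from convergence of \eqref{eq:newtonpolarreal}) guarantees that $\widehat{X}_k^T\widehat{X}_k$ is invertible so that the iteration is well defined. Second, the inductive expansion: using $\widehat{X}_k^T\widehat{X}_k=X_k^TX_k+ih(X_k^TE_k+E_k^TX_k)+\mathcal{O}(h^2)$ and the first-order perturbation identity $(B+\epsilon C)^{-1}=B^{-1}-\epsilon B^{-1}CB^{-1}+\mathcal{O}(\epsilon^2)$, one expands $(\widehat{X}_k^T\widehat{X}_k)^{-1}$ to first order in $h$, substitutes into \eqref{eq:CSpolar}, and matches powers of $h$. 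The $\mathcal{O}(1)$ part reproduces the real update \eqref{eq:newtonpolarreal} giving $X_{k+1}$, while the $\mathcal{O}(h)$ coefficient is exactly the Fréchet derivative of that update at $X_k$ applied to $E_k$, i.e.\ $E_{k+1}$ in the coupled scheme. Third, passage to the limit: either invoke the convergence of the coupled iteration from \Cref{sec:coupled}, or identify $\lim_k \widehat{X}_k$ directly as the unique nearby fixed point of \eqref{eq:CSpolar} with $\widehat{X}^T\widehat{X}=I_n$, namely the analytic continuation $\mathcal{P}(A+ihE)$, and then apply the CS expansion \eqref{eq:CSmatrix} to that analytic map.

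The main obstacle I expect is controlling the $\mathcal{O}(h^2)$ remainder uniformly in $k$, since an expansion that is valid for each fixed $k$ does not immediately transfer to the limit. In the sign case the quadratic convergence of Newton's iteration and analyticity of the scalar map around $\pm 1$ supply this control; here one would similarly use that \eqref{eq:newtonpolarreal} is quadratically convergent towards an isolated fixed point $\mathcal{P}(A)$ whose Fréchet derivative contracts near the limit, so that the error recursion for $\widehat{X}_k-(X_k+ihE_k)$ stays $\mathcal{O}(h^2)$ for all $k$. A cleaner alternative, which I would actually write up, is to bypass the uniform-in-$k$ issue by showing that for small $h$ the complex iteration converges to $\mathcal{P}(A+ihE)$ (its unique full-rank fixed point with orthonormal columns in the analytic extension sense), and then obtain the real- and imaginary-part statements by splitting $\mathcal{P}(A+ihE)=\mathcal{P}(A)+ihL_\mathcal{P}(A,E)+\mathcal{O}(h^2)$ via the standard CS argument applied to the analytic map $Z\mapsto Z(Z^TZ)^{-1/2}$.
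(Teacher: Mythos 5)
Your first plan---establish $\widehat{X}_k = X_k + ihE_k + \mathcal{O}(h^2)$ by induction against the coupled Fréchet iteration obtained from \eqref{eq:newtonpolarreal}, then invoke its known convergence---is exactly the paper's proof: it appeals to the argument of \Cref{thm:CSsign} applied to that coupled polar scheme and settles full rank by noting that $\widehat{X}_k^T\widehat{X}_k$ stays close to the real s.p.d.\ matrix $X_k^TX_k$ for small $h$, so the update remains well defined. The ``cleaner alternative'' you say you would actually write up is, however, a genuinely different route: first show that for small $h$ the complex iteration \eqref{eq:CSpolar} converges to the analytically continued polar map $Z\mapsto Z(Z^TZ)^{-1/2}$ evaluated at $Z=A+ihE$, then obtain (i) and (ii) from a single Taylor/CS expansion of that analytic map about $A$. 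This sidesteps the uniform-in-$k$ remainder bookkeeping you rightly flag, absorbing the $\mathcal{O}(h^2)$ constant once into the second Fréchet derivative of $\mathcal{P}$ at $A$; the price is that you must separately justify convergence of the complex iteration to that fixed point (which does follow, e.g.\ because $Z^TZ$ is a small perturbation of the s.p.d.\ matrix $A^TA$ and hence has no eigenvalues on $\mathbb{R}^-$, so \eqref{eq:sqrtidentities} and the known convergence of the complex square-root Newton iteration apply). One thing the paper's iterate-tracking route buys that the limit-only route does not is the per-iterate relation $\mathrm{Re}(\widehat{X}_k)=X_k+\mathcal{O}(h^2)$, which the paper later reuses for the structure-preservation claim in \Cref{thm:CSpade}; for \Cref{thm:CSpolar} itself, where only the limits are asserted, either route suffices.
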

\begin{proof}
The proof works in the same way as \Cref{thm:CSsign}, where \eqref{eq:strongIV} is shown for \eqref{eq:CSpolar} and the coupled Fréchet iteration 
\begin{equation*}
\begin{array}{ll}
X_{k+1} = \frac{1}{2}X_k\left(I + (X_k^TX_k)^{-1}\right), &X_0=A,\\
E_{k+1} = \frac{1}{2}\Bigl[E_k(I+(X_k^TX_k)^{-1}) 	   & \\			
		- X_k(X_k^TX_k)^{-1}(X_k^TE_k+E_k^TX_k)(X_k^TX_k)^{-1}\Bigr], &E_0=E,
\end{array}
\end{equation*} 
obtained from \eqref{eq:newtonpolar}. Since $X_0$ has full rank, $h$ can be chosen small enough to achieve full rank of $\widehat{X}_0$ as well. Since $\widehat{X}_0^T\widehat{X}_0$ is then symmetric and positive definite (s.p.d.), $I_n + (\widehat{X}_0^T\widehat{X}_0)^{-1}$ is s.p.d. as well and $\widehat{X}_1$ has full rank. Inductively, $\widehat{X}_k$ can be shown to have full rank as well, for every $k$.
\end{proof}
\begin{theorem}[The CS for the Square root function]\label{thm:CSsqrt}
Let $A,E\in\mathbb{R}^{n,n}$ with $A$ having no eigenvalues on $\mathbb{R}^-$. 
\begin{enumerate}
\item Consider the CS-iteration (cf. \eqref{eq:newtonsqrt})
\begin{equation}
\widehat{X}_{k+1}=\frac{1}{2}\left(\widehat{X}_k+ \widehat{X}_k^{-1}\widehat{X}_0\right),\quad \widehat{X}_0=A+ihE.
\end{equation}
Then, for any $h$ being sufficiently small, the iterates $\widehat{X}_k\in\mathbb{C}^{n,n}$ are nonsingular. Moreover, the following hold:
\begin{align*}
&\lim_{k\rightarrow\infty}\mathrm{Re}\left(\widehat{X}_k\right)=A^{1/2}+\mathcal{O}(h^2) \tag{i}\\
&\lim_{k\rightarrow\infty}\mathrm{Im}\left(\frac{\widehat{X}_k}{h}\right)=L_{X^{1/2}}(A,E)+\mathcal{O}(h^2) \tag{ii}
\end{align*}
\item Consider the CS-DB-iteration (cf. \eqref{eq:DBI})
\begin{equation}
\begin{array}{ll}
\widehat{Y}_{k+1}=\frac{1}{2}\left(\widehat{Y}_k+ \widehat{Z}_k^{-1}\right),		&\widehat{Y}_0=A+ihE,\\
\widehat{Z}_{k+1}=\frac{1}{2}\left(\widehat{Z}_k+ \widehat{Y}_k^{-1}\right),		&\widehat{Z}_0=I_n.
\end{array} \label{eq:CSDBI}
\end{equation}
Then, for any $h$ being sufficiently small, the iterates $\widehat{Y}_k\in\mathbb{C}^{n,n}$ and $\widehat{Z}_k\in\mathbb{C}^{n,n}$ are nonsingular. Moreover, the following hold:
\begin{align*}
&\lim_{k\rightarrow\infty}\mathrm{Re}\left(\widehat{Y}_k\right)=A^{1/2}+\mathcal{O}(h^2)\tag{i}\\
&\lim_{k\rightarrow\infty}\mathrm{Re}\left(\widehat{Z}_k\right)=A^{-1/2}+\mathcal{O}(h^2)\tag{ii}\\
&\lim_{k\rightarrow\infty}\mathrm{Im}\left(\frac{\widehat{Y}_k}{h}\right)=L_{X^{1/2}}(A,E)+\mathcal{O}(h^2)\tag{iii}\\
&\lim_{k\rightarrow\infty}\mathrm{Im}\left(\frac{\widehat{Z}_k}{h}\right)=L_{X^{-1/2}}(A,E)+\mathcal{O}(h^2)\tag{iv}
\end{align*}
\end{enumerate} 
\end{theorem}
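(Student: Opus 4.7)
The plan is to mirror the strategy used for \Cref{thm:CSsign}: rather than reasoning directly about the asymptotic identities $(i)$--$(iv)$, I would establish the stronger step-by-step invariant of type \eqref{eq:strongIV}, namely
\begin{equation*}
\mathrm{Re}(\widehat{X}_k) = X_k + \mathcal{O}(h^2), \qquad \mathrm{Im}(\widehat{X}_k) = hE_k + \mathcal{O}(h^3),
\end{equation*}
where $\{X_k\}$ and $\{E_k\}$ are the iterates of the associated coupled Newton--Fréchet iteration on real matrices (initialized with $X_0=A$, $E_0=E$, and $Y_0,Z_0$, $F_0=E$, $G_0=0$ respectively in part (2)). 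The four convergence claims then follow at once, since the coupled iterations obtained from \eqref{eq:newtonsqrt} and \eqref{eq:DBI} are known to converge to $A^{\pm 1/2}$ and $L_{X^{\pm 1/2}}(A,E)$ (cf. the discussion in \Cref{sec:coupled}).

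For part (1), I would first write out the coupled scheme explicitly. Perturbing the initial data $A\to A+hE$ in $X_{k+1}=\tfrac12(X_k+X_k^{-1}X_0)$ and differentiating at $h=0$ yields
\begin{equation*}
E_{k+1}=\tfrac{1}{2}\bigl(E_k - X_k^{-1}E_kX_k^{-1}A + X_k^{-1}E\bigr),\qquad E_0=E.
\end{equation*}
The invariant is then verified inductively. The base case is immediate from $\widehat{X}_0 = A + ihE$. For the inductive step I use the complex-inverse expansion: writing $\widehat{X}_k = R + iS$ with $R = X_k + \mathcal{O}(h^2)$ and $S = hE_k+\mathcal{O}(h^3)$, the Neumann expansion around $R$ gives
\begin{equation*}
\widehat{X}_k^{-1} = R^{-1} - iR^{-1}SR^{-1} + \mathcal{O}(h^2),
\end{equation*}
whose real part is $X_k^{-1}+\mathcal{O}(h^2)$ and whose imaginary part is $-hX_k^{-1}E_kX_k^{-1} + \mathcal{O}(h^3)$. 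Multiplying by $\widehat{X}_0 = A + ihE$ and substituting into the iteration, the real and imaginary parts separate exactly into the update rules for $X_{k+1}$ and $E_{k+1}$ plus the promised remainders. Nonsingularity of $\widehat{X}_k$ is a byproduct: since $X_k \to A^{1/2}$ is nonsingular, the perturbation $\widehat{X}_k = X_k + \mathcal{O}(h)$ is nonsingular for all $h$ small enough, uniformly in $k$.

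Part (2) proceeds analogously but with a coupled pair of invariants for $(\widehat{Y}_k,\widehat{Z}_k)$. Writing down the coupled Fréchet iteration obtained by perturbing \eqref{eq:DBI},
\begin{equation*}
\begin{array}{ll}
F_{k+1}=\tfrac{1}{2}\bigl(F_k-Z_k^{-1}G_kZ_k^{-1}\bigr), & F_0=E,\\
G_{k+1}=\tfrac{1}{2}\bigl(G_k-Y_k^{-1}F_kY_k^{-1}\bigr), & G_0=0,
\end{array}
\end{equation*}
I would show by simultaneous induction that $\mathrm{Re}(\widehat{Y}_k)=Y_k+\mathcal{O}(h^2)$, $\mathrm{Im}(\widehat{Y}_k)=hF_k+\mathcal{O}(h^3)$, and analogously for $\widehat{Z}_k$ with $G_k$. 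The inductive step only uses the complex-inverse expansion once for each block and is formally identical to part (1).

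The main obstacle, as in \Cref{thm:CSsign}, is not the algebra of a single step but making the $\mathcal{O}$-constants uniform in $k$ so that the inductive invariant actually implies asymptotic convergence. For this I would argue that, since the real iterates $X_k$ (respectively $Y_k,Z_k$) converge and stay in a compact set bounded away from singular matrices, there is a uniform bound on $\|X_k^{-1}\|$ and $\|E_k\|$ along the whole trajectory; the $\mathcal{O}(h^2)$ and $\mathcal{O}(h^3)$ error terms can then be controlled by a constant independent of $k$ for $h$ sufficiently small, so that nonsingularity persists and the limits may be taken inside $\mathrm{Re}$ and $\mathrm{Im}/h$ to yield $(i)$--$(iv)$.
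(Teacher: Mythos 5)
Your part (1) is essentially the paper's proof: establish the invariant of type \eqref{eq:strongIV} by induction against the coupled Newton--Fr\'echet iteration for the square root, and your computation of $E_{k+1}=\tfrac12\bigl(E_k - X_k^{-1}E_kX_k^{-1}A + X_k^{-1}E\bigr)$ is correct (note $X_0=A$ is kept fixed and differentiated). For part (2), however, you take a genuinely different route. You redo the simultaneous induction from scratch for the pair $(\widehat{Y}_k,\widehat{Z}_k)$ against the coupled DB--Fr\'echet system with $F_0=E$, $G_0=0$, which works but duplicates the algebra of part (1) and of \Cref{thm:CSsign}. The paper instead exploits the fact that the DB iteration \emph{is} the Newton sign iteration \eqref{eq:newtonsign} applied to the $2n\times 2n$ block matrix $\bigl[\begin{smallmatrix}0 & A\\ I & 0\end{smallmatrix}\bigr]$, so the CS-DB iteration \eqref{eq:CSDBI} is exactly the CS-sign iteration \eqref{eq:CSsign} started at $\widehat{X}_0=\bigl[\begin{smallmatrix}0 & A+ihE\\ I & 0\end{smallmatrix}\bigr]$, with $\widehat{Y}_k,\widehat{Z}_k$ appearing as its $(1,2)$- and $(2,1)$-blocks. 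All four asymptotic claims (i)--(iv) and the nonsingularity of $\widehat{Y}_k,\widehat{Z}_k$ then fall out of \Cref{thm:CSsign} at once, with no new induction. Your direct route is more self-contained and does not rely on the block identity \eqref{eq:blocksign}, but it is longer; the paper's block reduction is shorter and reuses an already-proved theorem, at the cost of having to notice the block structure. Your closing remark about making the $\mathcal{O}$-constants uniform in $k$ (using that $X_k$, $Y_k$, $Z_k$ stay in a compact set of nonsingular matrices) is a genuine point that the paper glosses over by appealing to the proof of \Cref{thm:CSsign}; it is worth keeping.
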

\begin{proof}
The proof of 1.) again works in the same way as \Cref{thm:CSsign} and \Cref{thm:CSpolar}.\\
For 2.), we use the fact that the DB-iteration can be obtained by using the sign iteration \eqref{eq:newtonsign} for $X_0=\begin{bmatrix}0&A\\I & 0\end{bmatrix}$. Thus, \eqref{eq:CSDBI} can be obtained from the CS-sign-iteration using 
\begin{equation}
\widehat{X}_0=X_0+ihE_0=X_0+ih\begin{bmatrix}0 & E\\ 0 & 0\end{bmatrix}=\begin{bmatrix}0 & A+ihE\\ I & 0\end{bmatrix} \label{eq:CSdbiinit}
\end{equation}
as the initial value. Since $\widehat{X}_k$ is nonsingular by \Cref{thm:CSsign}, $\widehat{Y}_k$ and $\widehat{Z}_k$ have to be nonsingular as the (1,2)- and (2,1)-block of $\widehat{X}_k$.
\end{proof}
\noindent
Note that the CS works for the square root directly as it is an analytic function, so the results from \Cref{thm:CSsqrt} are to be expected. Nevertheless, it might be beneficial to use iterative methods over direct methods for computing the square root if $A$ has some additional properties, such as sparsity or group structure, which is of particular interest in the following.
\section{The Complex Step for the family of Padé iterations}\label{sec:CSpade}
In this section, we try to extend the results obtained in \Cref{sec:newtonCS} to the family of Padé iterations. Recall that for the sign and polar function, the Padé iterative schemes are induced by the update function
\begin{eqnarray*}
g(X)&=&\begin{cases}Xp_{\ell}(I-X^2)q_{m}(I-X^2)^{-1} & \mathrm{sign}\\
				  Xp_{\ell}(I-X^TX)q_{m}(I-X^TX)^{-1} & \mathcal{P}\end{cases}\\
	&=&\begin{cases}Xr_{\ell m}(I-X^2)& \mathrm{sign}\\
				  Xr_{\ell m}(I-X^TX)& \mathcal{P}\end{cases}
\end{eqnarray*}
In particular, $g$ is a rational function in both cases and thus sufficiently smooth for the CS-approach to be applicable to $g$. The following theorem presents our main result concerning the CS and the family of Padé iterations:
\begin{theorem}[The CS for the family of Padé iterations]\label{thm:CSpade}
Let $\ell\geq 1$, $A,E\in\mathbb{R}^{m,n}$.
\begin{enumerate}
\item If $m=n$ and $A$ has no purely imaginary eigenvalues, consider the CS-Padé-sign-iteration (cf. \eqref{eq:padesignmatrix})
\begin{equation}
	\widehat{X}_{k+1}=\widehat{X}_k r_{\ell\ell}\left(I-\widehat{X}_k^2\right),\quad \widehat{X}_0=A+ihE. \label{eq:CSpadesignthm}
\end{equation}
Then, for any $h$ being sufficiently small, the iterates $\widehat{X}_k\in\mathbb{C}^{n,n}$ are nonsingular. Moreover, the following hold:
\begin{align*}
&\mathrm{Re}\left(\widehat{X}_k\right)=X_k + \mathcal{O}(h^2) \rightarrow \mathrm{sign}(A)+\mathcal{O}(h^2)\tag{i}\\
&\mathrm{Im}\left(\frac{\widehat{X}_k}{h}\right)=E_k+\mathcal{O}(h^2) \rightarrow L_{\mathrm{sign}}(A,E)+\mathcal{O}(h^2) \tag{ii}
\end{align*}
Here, $X_k$ and $E_k$ are the iterates generated by the iterative scheme \eqref{eq:newtonfrechetpadesign} converging to $\mathrm{sign}(A)$ and $L_{\mathrm{sign}}(A,E)$, respectively. 
\item If $m\geq n$ and $A$ has full rank, consider the CS-Padé-Polar-iteration (cf. \eqref{eq:padepolar} using the transpose)
\begin{equation}
	\widehat{X}_{k+1}=\widehat{X}_k r_{\ell\ell}\left(I-\widehat{X}_k^T\widehat{X}_k\right),\quad \widehat{X}_0=A+ihE. \label{eq:CSpadepolarthm}
\end{equation}
Then, for any $h$ being sufficiently small, the iterates $\widehat{X}_k\in\mathbb{C}^{m,n}$ have full rank. Moreover, the following hold:
\begin{align*}
&\mathrm{Re}\left(\widehat{X}_k\right)=X_k + \mathcal{O}(h^2) \rightarrow \mathcal{P}(A)+\mathcal{O}(h^2)\tag{i}\\
&\mathrm{Im}\left(\frac{\widehat{X}_k}{h}\right)=E_k+\mathcal{O}(h^2) \rightarrow L_{\mathcal{P}}(A,E)+\mathcal{O}(h^2) \tag{ii}
\end{align*}
Here, $X_k$ and $E_k$ are the iterates generated by the coupled scheme obtained from \eqref{eq:padepolar} (see \eqref{eq:frechetpade},\eqref{eq:newtonfrechetpadesign} for $\widehat{h}(X)=r_{\ell \ell}(I-X^TX)$) converging to $\mathcal{P}(A)$ and $L_{\mathcal{P}}(A,E)$, respectively. 
\end{enumerate}
Additionaly, if $A\in\mathbb{R}^{n,n}$ is an automorphism ($A\in\mathbb{G}$), the structure is approximately preserved during both iterations. That is, we have
\begin{equation}
\mathrm{dist}\left(\mathrm{Re}\left(\widehat{X}_k\right),\mathbb{G}\right)=\min_{G\in\mathbb{G}}||\mathrm{Re}\left(\widehat{X}_k\right)-G||=\mathcal{O}(h^2), \label{eq:distance}
\end{equation}
for every $k\geq 0$.
\end{theorem}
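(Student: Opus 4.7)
The plan is to mirror the Al-Mohy--Higham proof of \Cref{thm:CSsign}, strengthening the underlying identity \eqref{eq:strongIV} to the Padé setting. Concretely, the central claim I would establish by induction on $k$ is
\[
\mathrm{Re}\bigl(\widehat{X}_k\bigr)=X_k+\mathcal{O}(h^2),\qquad \mathrm{Im}\bigl(\widehat{X}_k\bigr)=hE_k+\mathcal{O}(h^3),
\]
where $(X_k,E_k)$ are the iterates of the coupled Padé scheme \eqref{eq:newtonfrechetpadesign} in case (1), respectively its polar analogue built from $\widehat{h}(X)=r_{\ell\ell}(I-X^TX)$ in case (2), both initialized at $(X_0,E_0)=(A,E)$. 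Given this identity together with the convergence of the coupled schemes to $(\mathrm{sign}(A),L_{\mathrm{sign}}(A,E))$ and $(\mathcal{P}(A),L_{\mathcal{P}}(A,E))$ that follows from \Cref{lem:pade} and the framework of \Cref{sec:coupled}, the statements (i) and (ii) in both cases drop out at once.

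The substantive step is the induction itself. The update function $g(X)=Xr_{\ell\ell}(I-X^2)$ for the sign, and $g(X)=Xr_{\ell\ell}(I-X^TX)$ for the polar, is a real rational function of $X$ with no complex conjugation, hence real-analytic wherever the Padé denominator is invertible; its first Fréchet derivative $L_g$ is given by the product rule applied to \eqref{eq:frechetpade}, and one checks that $L_g(X_k,E_k)=E_{k+1}$ precisely by construction of the coupled scheme. The base case $k=0$ is trivial because $\widehat{X}_0=A+ihE=X_0+ihE_0$ exactly. For the inductive step I would write $\widehat{X}_k=X_k+ihE_k+P_k$ with $\mathrm{Re}(P_k)=\mathcal{O}(h^2)$ and $\mathrm{Im}(P_k)=\mathcal{O}(h^3)$, then Taylor-expand
\[
g(\widehat{X}_k)=g(X_k)+L_g(X_k,\,ihE_k+P_k)+\tfrac{1}{2}L_g^{[2]}(X_k;\,ihE_k+P_k,\,ihE_k+P_k)+\cdots
\]
Using linearity of $L_g$ in its direction, bilinearity of $L_g^{[2]}$, and the fact that $g(X_k)=X_{k+1}$ and $L_g(X_k,E_k)=E_{k+1}$ are \emph{real} matrices (because $X_k,E_k$ are real and $g$ has real coefficients), I separate real and imaginary parts and read off $\mathrm{Re}(g(\widehat{X}_k))=X_{k+1}+\mathcal{O}(h^2)$ and $\mathrm{Im}(g(\widehat{X}_k))=hE_{k+1}+\mathcal{O}(h^3)$, which is exactly the inductive hypothesis at step $k+1$. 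Well-posedness at each step---invertibility of $\widehat{X}_k$ for the sign, full rank of $\widehat{X}_k$ for the polar, and invertibility of the matrix $q_\ell(I-\widehat{X}_k^2)$ or $q_\ell(I-\widehat{X}_k^T\widehat{X}_k)$---follows by continuity from the corresponding open conditions at the real iterate $X_k$, which hold by \Cref{lem:pade}.

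The main obstacle I foresee is bookkeeping the error bounds \emph{uniformly} in $k$: one has to check that the implicit constants in $\mathcal{O}(h^2)$ and $\mathcal{O}(h^3)$ do not blow up as the iteration progresses, so that a single threshold $h_0>0$ works for every $k$. This should go through because the coupled iterates $(X_k,E_k)$ converge at rate $2\ell+1$ by \Cref{lem:pade}, so their trajectory is bounded and the relevant higher Fréchet derivatives of $g$ along it are uniformly bounded; the Padé denominators are likewise uniformly bounded away from singularity on a neighbourhood of the limit. Once the main induction is established, the structure-preservation statement \eqref{eq:distance} is almost immediate: for $A\in\mathbb{G}$, \Cref{lem:pade} yields $X_k\in\mathbb{G}$ for all $k$, hence
\[
\mathrm{dist}\bigl(\mathrm{Re}(\widehat{X}_k),\mathbb{G}\bigr)\leq\bigl\|\mathrm{Re}(\widehat{X}_k)-X_k\bigr\|_F=\mathcal{O}(h^2),
\]
which is \eqref{eq:distance}.
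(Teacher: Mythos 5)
Your proposal is correct and follows essentially the same route as the paper: an induction establishing $\widehat{X}_k = X_k + ihE_k + \mathcal{O}(h^2) + i\mathcal{O}(h^3)$ via Taylor expansion, with the structure-preservation bound then dropping out because $X_k\in\mathbb{G}$ by \Cref{lem:pade}. The only cosmetic difference is that you Taylor-expand the full update $g$ in one shot whereas the paper expands $f(\widehat{X}_k)$ and $r_{\ell\ell}(I-f(\widehat{X}_k))$ in two nested steps; your explicit remark about keeping the $\mathcal{O}(h^2)$ constants uniform in $k$ is a point the paper's proof leaves implicit, and your justification (boundedness of the coupled trajectory and of the relevant Fréchet derivatives near the limit) is the right one.
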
 
\begin{proof}
We denote $s(X)=X^2$ and $t(X)=X^TX$ and do the proof for square $A$ and $E$ only. In the case of a full rank rectangular $A$, $t(A)$ is square and nonsingular and similar arguments hold to perform an analogous proof.\\
We will show the identity 
\begin{equation}
	\widehat{X}_k = X_k + ihE_k + \mathcal{O}(h^2) + i\mathcal{O}(h^3),\text{ }k\geq 0 \label{eq:IV}
\end{equation}
by induction, where $X_k,E_k$ are the iterates generated by the coupled scheme \eqref{eq:newtonfrechetpadesign} or the corresponding scheme for $(\mathcal{P}(A),L_{\mathcal{P}}(A,E))$ using $\widetilde{h}(X)=r_{\ell\ell}(I_n-X^TX)$, respectively. The initial matrix $A$ is assumed to satisfy the conditions stated in 1. and 2. for the desired function. Since $X_k$ and $E_k$ are known to converge \cite{polarfrechet}, showing \eqref{eq:IV} implies (i) and (ii). The approximate structure preservation stated in \eqref{eq:distance} then follows immediately from \eqref{eq:IV}.\\
For $k=0$, we set $\widehat{X}_0=A+ihE$, choose $f(X)$ to be either $s(X)$ or $t(X)$ and compute the first CS-iterate
\begin{eqnarray*}
\widehat{X}_1&=&\widehat{X}_0r_{\ell\ell}\left(I-f\left(\widehat{X}_0\right)\right)\\
			 &=& (A+ihE) r_{\ell\ell}\left(I-f(A+ihE)\right).
\end{eqnarray*}
Since $f$ and $r_{\ell\ell}$ are Fréchet differentiable, we have 
\begin{equation}
	f(A+ihE)=f(A)+ihL_f(A,E)+\mathcal{O}(h^2)+i\mathcal{O}(h^3) \label{eq:ftaylor}
\end{equation} 
and
\begin{eqnarray}
	r_{\ell\ell}\left(I-f(A+ihE)\right)&\stackrel{\eqref{eq:ftaylor}}{=}&r_{\ell\ell}\left(I-f(A)+ihL_f(A,E)+\mathcal{O}(h^2)+i\mathcal{O}(h^3)\right)\notag\\
					&=&r_{\ell\ell}\left(I-f(A)\right)+ihL_{r_{\ell\ell}}\left(I-f(A),L_f(A,E)\right)\notag\\
					&{}&+ \mathcal{O}(h^2)+i\mathcal{O}(h^3) \label{eq:rtaylor}
\end{eqnarray}
using Taylors formula. In consequence, we obtain the first CS-iterate
\begin{eqnarray*}
	\widehat{X}_1 &=& (A+ihE) r_{\ell\ell}\left(I-f(A+ihE)\right)\\
				  &\stackrel{\eqref{eq:rtaylor}}{=}& \begin{aligned}[t] &(A+ihE) \Bigl[r_{\ell\ell}\left(I-f(A)\right) + \mathcal{O}(h^2)\\
				  & +ihL_{r_{\ell\ell}}\left(I-f(A),L_f(A,E)\right) +i\mathcal{O}(h^3)\Bigr] \end{aligned}\\
				  &=& \begin{aligned}[t] &A r_{\ell\ell}\left(I-f(A)\right) + ih\Bigl[E r_{\ell\ell}\left(I-f(A)\right) \\  &+ A L_{r_{\ell\ell}}\left(I-f(A),L_f(A,E)\right)\Bigr]+ \mathcal{O}(h^2)+i\mathcal{O}(h^3)\end{aligned}\\
				  &\stackrel{\eqref{eq:newtonfrechetpadesign}}{=}& X_1 + \mathcal{O}(h^2) + ih\left(E_1 + \mathcal{O}(h^2)\right).  		
\end{eqnarray*}
Since $X_1$ is obtained from the standard Padé-scheme \eqref{eq:padesignmatrix} or \eqref{eq:padepolar}, it again satisfies 1. or 2., respectively.\\
Due to the relation
\begin{equation*}
	\widehat{X}_1=X_1 + \mathcal{O}(h)
\end{equation*}
and $\mathrm{sign}(X)$ and $\mathcal{P}(X)$ being continuous on the given subsets (see the thoughts in \Cref{rem:signpolarsqrt}), $\widehat{X}_1$ satisfies 1. or 2. as well for $h$ being sufficiently small.\\
For $k\geq 0$, let $\widehat{X}_k = X_k + ihE_k + \mathcal{O}(h^2) + i\mathcal{O}(h^3)$. Then, we have 
\begin{eqnarray*}
	f(\widehat{X}_k)&\stackrel{\eqref{eq:IV}}{=}&f\left(X_k+ihE_k+\mathcal{O}(h^2)+i\mathcal{O}(h^3)\right)\\
					&=&f(X_k) + ih L_f(X_k,E_k) + \mathcal{O}(h^2)+i\mathcal{O}(h^3)
\end{eqnarray*} 
and
\begin{eqnarray*}
	r_{\ell\ell}\left(I-f(\widehat{X}_k)\right)&=&r_{\ell\ell}\left(I-f(X_k)+ihL_f(X_k,E)+\mathcal{O}(h^2)+i\mathcal{O}(h^3)\right) \\
	&\stackrel{\eqref{eq:rtaylor}}{=}& r_{\ell\ell}\left(I-f(X_k)\right)+ihL_{r_{\ell\ell}}\left(I-f(X_k),L_f(X_k,E_k)\right)\\
	&{}&+ \mathcal{O}(h^2)+i\mathcal{O}(h^3). 
\end{eqnarray*}
Using these computations, the $(k+1)$-st CS-iterate reads
\begin{eqnarray}
	\widehat{X}_{k+1} &=& \widehat{X}_k r_{\ell\ell}\left(I-f(\widehat{X}_k)\right) \notag\\
				  & \stackrel{\eqref{eq:IV}}{=}& \begin{aligned}[t] &\left(X_k+ihE_k + \mathcal{O}(h^2)+i\mathcal{O}(h^3)\right)\\
				  &\cdot \Bigl[r_{\ell\ell}\left(I-f(X_k)\right) + \mathcal{O}(h^2)\\
				  &+ihL_{r_{\ell\ell}}\left(I-f(X_k),L_f(X_k,E_k)\right)+i\mathcal{O}(h^3)\Bigr]\end{aligned} \notag \\
				  &=& \begin{aligned}[t] &X_k r_{\ell\ell}\left(I-f(X_k)\right) + ih\Bigl[E_k r_{\ell\ell}\left(I-f(X_k)\right)  \\
				   & + X_k L_{r_{\ell\ell}}\left(I-f(X_k),L_f(X_k,E_k)\right)\Bigr]\\
				   & + \mathcal{O}(h^2)+i\mathcal{O}(h^3)\\				  
\end{aligned}\notag\\				   
				  & \stackrel{\eqref{eq:newtonfrechetpadesign}}{=}& X_{k+1} + \mathcal{O}(h^2) + ih\left(E_{k+1} + \mathcal{O}(h^2)\right),  \label{eq:inducproof}	
\end{eqnarray}
concluding the induction.\\
Taking a closer look at \eqref{eq:inducproof}, it is immediate to see that 
\begin{equation}
	\mathrm{Re}\left(\widehat{X}_k\right)=X_k+\mathcal{O}(h^2),{ }\forall k\geq 0 \label{eq:coupledidentity}
\end{equation}
and for every $k\geq 0$, we can find a suitable real matrix $B_k\in\mathbb{R}^{n,n}$ such that $\mathrm{Re}\left(\widehat{X}_k\right)=X_k+h^2B_k$. Thus, for $A$ being an automorphism, the CS-iterates $\widehat{X}_k$ satisfy
\begin{eqnarray*}
\mathrm{dist}\left(\mathrm{Re}\left(\widehat{X}_k\right),\mathbb{G}\right)&\stackrel{\eqref{eq:coupledidentity}}{=}&\min_{G\in\mathbb{G}}||X_k+h^2B_k-G||\\
																		& \leq & \min_{G\in\mathbb{G}}\left(||X_k-G|| + h^2||B_k||\right)\\
																		& \stackrel{(*)}{=} & h^2 ||B_k||,
\end{eqnarray*}
where $(*)$ follows from $X_k$ being an automorphism by \Cref{lem:pade}. Now we define $||B_k||\coloneqq C$ and conclude 
$$\mathrm{dist}\left(\mathrm{Re}\left(\widehat{X}_k\right),\mathbb{G}\right)\leq C h^2 =\mathcal{O}(h^2).$$
\end{proof}
\noindent
Due to \Cref{thm:CSpade}, we now have an approach for evaluating the sign or polar function at a given point and computing the Fréchet derivative in a given direction simultaneously, that has a few benefits compared to the approaches from \Cref{sec:coupled} and \Cref{sec:newtonCS}: First of all, it is very easy to implement, as long as one is using a programming language that is able to handle complex arithmetics. As opposed to the coupled iterations from \Cref{sec:coupled}, where computing the Fréchet derivative $L_g(X_k,E_k)$ is necessary in every Newton step, evaluating the rational function $r_{\ell\ell}(I-X^2)$ (or $r_{\ell\ell}(I-X^TX)$, respectively)  at a complex argument efficiently is the main computational effort required in the CS scenario.\\
Secondly, the parameter $\ell$ can, in theory, be chosen freely to achieve the order of convergence one wants to accomplish. However, in our experiments, we observed that choosing $\ell\geq 3$ leads to significant numerical problems, since forming higher powers of $X_k$ can lead to ill-conditioned matrices $p(I-X^2)$ and, more importantly, $q(I-X^2)$, that are used for solving 
\begin{equation*}
r_{\ell\ell}(I-X^2)q_{\ell}(I-X^2)=p_{\ell}(I-X^2)
\end{equation*}
to obtain $r_{\ell\ell}(I-X^2)$. Thus, we propose to choose $\ell=1,2$, which still leads to an order of convergence of three or five, respectively, which is superior to the quadratic order of convergence of the standard Newton methods from \Cref{sec:newtonCS}.\\
The third benefit is that one does not have to know the Fréchet derivative $L_g(X)$ explicitly, which, in some applications, might be difficult due to $g$ being a function that is demanding to differentiate or not given in a closed form.\\
Finally, choosing $h$ as small as necessary to achieve the structure preservation property of the Padé scheme does not affect the quality of the CS-approximation, as opposed to using finite differences. As a consequence, the real part of $\widehat{X}_k$ will be as close to an automorphism as one desires.\\
To conclude with the family of Padé iterations, we now present the result for the Padé square root iteration. The proof works in the same way as \Cref{thm:CSpade} or can be diretly obtained from \eqref{eq:CSpadesignthm} using $\widehat{X}_0$ as in \eqref{eq:CSdbiinit}:
\begin{theorem}[The CS for the Padé Square root iteration] \label{thm:CSpadesqrt}
Let $\ell\geq 1$, $A,E\in\mathbb{R}^{n,n}$, with $A$ having no eigenvalues in $\mathbb{R}^-$. Consider the CS-iteration (cf. \eqref{eq:padesqrt})
\begin{equation}
\begin{array}{ll}
\widehat{Y}_{k+1}=\widehat{Y}_k r_{\ell\ell}(I-\widehat{Z}_k\widehat{Y}_k),		&\widehat{Y}_0=A+ihE,\\
\widehat{Z}_{k+1}= r_{\ell\ell}(I-\widehat{Z}_k\widehat{Y}_k) \widehat{Z}_k,		&\widehat{Z}_0=I.
\end{array} \label{eq:CSpadesqrt}
\end{equation}
Then, for any $h$ being sufficiently small, the iterates $\widehat{Y}_k\in\mathbb{C}^{n,n}$ and $\widehat{Z}_k\in\mathbb{C}^{n,n}$ are nonsingular. Moreover, the following hold:
\begin{align*}
&\mathrm{Re}\left(\widehat{Y}_k\right)=Y_k+\mathcal{O}(h^2)\rightarrow A^{1/2} +\mathcal{O}(h^2)\tag{i}\\
&\mathrm{Re}\left(\widehat{Z}_k\right)=Z_k+\mathcal{O}(h^2)\rightarrow A^{-1/2}+\mathcal{O}(h^2)\tag{ii}\\
&\mathrm{Im}\left(\frac{\widehat{Y}_k}{h}\right)=D_k+\mathcal{O}(h^2)\rightarrow L_{X^{1/2}}(A,E)+\mathcal{O}(h^2)\tag{iii}\\
&\mathrm{Im}\left(\frac{\widehat{Z}_k}{h}\right)=F_k+\mathcal{O}(h^2)\rightarrow L_{X^{-1/2}}(A,E)+\mathcal{O}(h^2)\tag{iv}
\end{align*}
Here, $D_k$ and $F_k$ are the iterates generated by the coupled iterative scheme obtained from \eqref{eq:newtonfrechetpadesign} for $\tilde{h}(X)\equiv \tilde{h}(Y,Z)=r_{\ell\ell}(I-ZY)$ converging to $L_{X^{1/2}}(A,E)$ and $L_{X^{-1/2}}(A,E)$, respectively. Additionaly, if $A\in\mathbb{G}$ is an automorphism, the structure is approximately preserved for $\widehat{Y}_k$ and $\widehat{Z}_k$. That is, we have  
\begin{equation*}
\mathrm{dist}\left(\mathrm{Re}\left(\widehat{Y}_k\right),\mathbb{G}\right)=\mathcal{O}(h^2)\text{ }\text{ and }\text{ }\mathrm{dist}\left(\mathrm{Re}\left(\widehat{Z}_k\right),\mathbb{G}\right)=\mathcal{O}(h^2).
\end{equation*}
\end{theorem}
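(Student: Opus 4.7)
The plan is to follow the same induction strategy as in the proof of \Cref{thm:CSpade}, adapted to the coupled pair $(\widehat{Y}_k,\widehat{Z}_k)$; alternatively, everything can be reduced to \Cref{thm:CSpade}(1) via the block embedding already used in \Cref{thm:CSsqrt}. Both routes are available: the direct induction is the more transparent one, while the embedding is shorter but needs one extra commutation identity to land on the exact form of \eqref{eq:CSpadesqrt}.

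For the direct route, I would postulate the bivariate analogue of \eqref{eq:IV}, namely
\begin{align*}
\widehat{Y}_k &= Y_k + ihD_k + \mathcal{O}(h^2) + i\mathcal{O}(h^3),\\
\widehat{Z}_k &= Z_k + ihF_k + \mathcal{O}(h^2) + i\mathcal{O}(h^3),
\end{align*}
where $(Y_k,Z_k)$ are the real Padé square-root iterates \eqref{eq:padesqrt} started at $Y_0=A$, $Z_0=I$, and $(D_k,F_k)$ are the coupled Fréchet iterates obtained from \eqref{eq:padesqrt} by applying the product and chain rules to $\tilde h(Y,Z)=r_{\ell\ell}(I-ZY)$, started at $D_0=E$, $F_0=0$. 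The base case is immediate. The inductive step Taylor-expands $\widehat{Z}_k\widehat{Y}_k$, then $r_{\ell\ell}(I-\widehat{Z}_k\widehat{Y}_k)$ (permissible since a rational matrix function is Fréchet differentiable wherever its denominator is invertible), and finally the products $\widehat{Y}_k\,r_{\ell\ell}(\cdots)$ and $r_{\ell\ell}(\cdots)\,\widehat{Z}_k$, collecting real and imaginary parts exactly as in the computation \eqref{eq:inducproof}. Nonsingularity of $\widehat{Y}_k$ and $\widehat{Z}_k$ for small $h$ then follows from nonsingularity of $Y_k,Z_k$ (provided by \Cref{lem:pade}(ii)) together with the $\mathcal{O}(h^2)$ perturbation in the real part. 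Given the identity above, (i)--(iv) follow by taking real and imaginary parts and invoking convergence of $(Y_k,Z_k)$ via \Cref{lem:pade}(ii) and of $(D_k,F_k)$ via the coupled-iteration framework of \Cref{sec:coupled}. The approximate structure preservation follows from the same triangle-inequality argument used at the end of the proof of \Cref{thm:CSpade}, now applied to both iterates and using $Y_k,Z_k\in\mathbb{G}$ from \Cref{lem:pade}(ii).

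For the embedding route, I would apply \Cref{thm:CSpade}(1) with the initial block matrix
\[
\widehat{X}_0 = \begin{bmatrix}0 & A+ihE \\ I & 0\end{bmatrix},
\]
as in \eqref{eq:CSdbiinit}. A short block calculation gives $\widehat{X}_k^2=\mathrm{diag}(\widehat{Y}_k\widehat{Z}_k,\widehat{Z}_k\widehat{Y}_k)$, so $r_{\ell\ell}(I-\widehat{X}_k^2)$ is block diagonal and the anti-block-diagonal structure is preserved, yielding iterates whose $(1,2)$ and $(2,1)$ blocks match \eqref{eq:CSpadesqrt} after invoking the commutation $Z\,r_{\ell\ell}(I-YZ)=r_{\ell\ell}(I-ZY)\,Z$. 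The main obstacle I anticipate is precisely this identity: it reduces to the polynomial fact $Z(YZ)^j=(ZY)^jZ$, extended to the rational case via $q_\ell(I-YZ)^{-1}Z=Z\,q_\ell(I-ZY)^{-1}$, which in turn requires invertibility of $q_\ell$ at both arguments and hence the a priori smallness condition on $h$ already used in \Cref{thm:CSpade}. Once this is in place, everything reduces to a verbatim copy of the argument for \Cref{thm:CSpade}, and all remaining work in either route is routine bookkeeping.
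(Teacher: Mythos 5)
Your proposal is correct and matches the paper's own proof, which simply asserts that the result follows either by the same induction as \Cref{thm:CSpade} or by applying \eqref{eq:CSpadesignthm} to the block initial matrix \eqref{eq:CSdbiinit}; you reconstruct both routes faithfully. The commutation identity $Z\,r_{\ell\ell}(I-YZ)=r_{\ell\ell}(I-ZY)\,Z$ that you isolate is indeed the one detail the paper glosses over when passing from the $(2,1)$-block $\widehat{Z}_k r_{\ell\ell}(I-\widehat{Y}_k\widehat{Z}_k)$ to the stated form of \eqref{eq:CSpadesqrt}, and your polynomial-then-rational argument for it is sound.
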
 
\subsection{A note on higher derivatives}
We want to note, that the CS can be used in combination with the coupled iterations discussed in \Cref{sec:coupled} to obtain iterative CS-schemes for higher derivatives. For the second derivative, assume that $g$ is at least twice differentiable and compatible with the CS and consider the coupled scheme 
\begin{equation}
\begin{array}{ll}
X_{k+1}=g(X_k), &X_0=A,\\
E_{k+1}=L_g(X_k,E_k), &E_0=E,
\end{array} \label{eq:coupledCS}
\end{equation}
for computing $f(A)$ and $L_f(A,E)$. Then, given a second direction matrix $D$, the recursive definition \eqref{eq:higherfrechet} of the Fréchet derivative enables the computation of the second Fréchet derivative $L_f^{[2]}(A;E,D)$ by 
\begin{equation}
	L_f^{[2]}(A;E,D)=\mathrm{Im}\left(\frac{L_f(A+ihD,E)}{h}\right)+\mathcal{O}(h^2), \label{eq:secondCS}
\end{equation}  
assuming that $f$ is analytical. Additionaly, the order of the directions $E$ and $D$ is not important and we have $L_f^{[2]}(A;E,D)=L_f^{[2]}(A;D,E)$\cite{highamfrechet2}. However, establishing \eqref{eq:secondCS} for the sign and polar mapping can be done using \eqref{eq:coupledCS} if done correctly. To be more precise, one can show that the coupled CS-iteration 
\begin{equation}
\begin{array}{ll}
\widehat{X}_{k+1}=g(\widehat{X}_k), &\widehat{X}_0=A+ihD,\\
\widehat{E}_{k+1}=L_g(\widehat{X}_k,\widehat{E}_k), &\widehat{E}_0=E,
\end{array} \label{eq:coupledCSnew}
\end{equation}
converges as follows:
\begin{align*}
&\mathrm{Re}\left(\widehat{X}_k\right)\rightarrow f(A) + \mathcal{O}(h^2)\tag{i}\\
&\mathrm{Re}\left(\widehat{E}_k\right)\rightarrow L_f(A,E)+\mathcal{O}(h^2)\tag{ii}\\
&\mathrm{Im}\left(\frac{\widehat{X}_k}{h}\right)\rightarrow L_{f}(A,D)+\mathcal{O}(h^2)\tag{iii}\\
&\mathrm{Im}\left(\frac{\widehat{E}_k}{h}\right)\rightarrow L_{f}^{[2]}(A;D,E)+\mathcal{O}(h^2)\tag{iv}
\end{align*}
In theory, this procedure can be used for third or higher order derivatives as well: Given that one has access to an iterative scheme for computing the first $k$ derivatives of a function, the CS can be used for the iterate converging to $f(A)$ to obtain an iterative scheme for the $(k+1)$-st derivative and any of the first $k$ derivatives exchanging any direction $E_1,\dots,E_k$ with the new direction matrix $E_{k+1}$. If one chooses $g$ to be of the Padé type, automorphism structure can be preserved for $X_k$ in any case.\\
In some cases, it might be beneficial to consider this approach as opposed to the state of the art block-method established by Higham \cite[Alg. 3.6]{highamfrechet2} if one has access to sufficiently high derivatives of $g$ and can evaluate them efficiently.
\section{Numerical experiments}
\begin{figure*}[!h]
	\centering
	\includegraphics[clip, trim=1cm 0.8cm 3.1cm 1.2cm, width=0.49\linewidth]{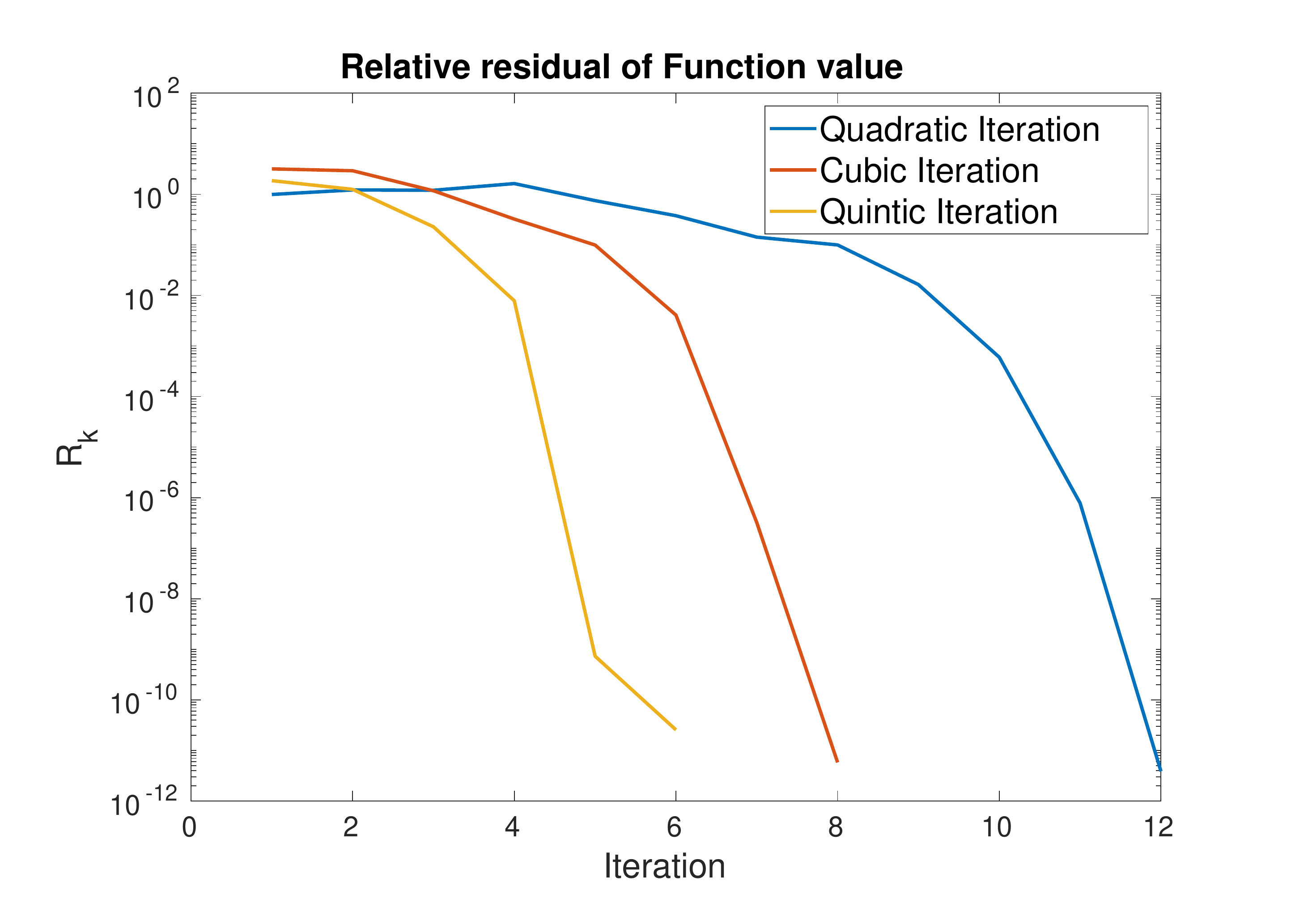}
	\includegraphics[clip, trim=1cm 0.8cm 3.1cm 1.2cm, width=0.49\linewidth]{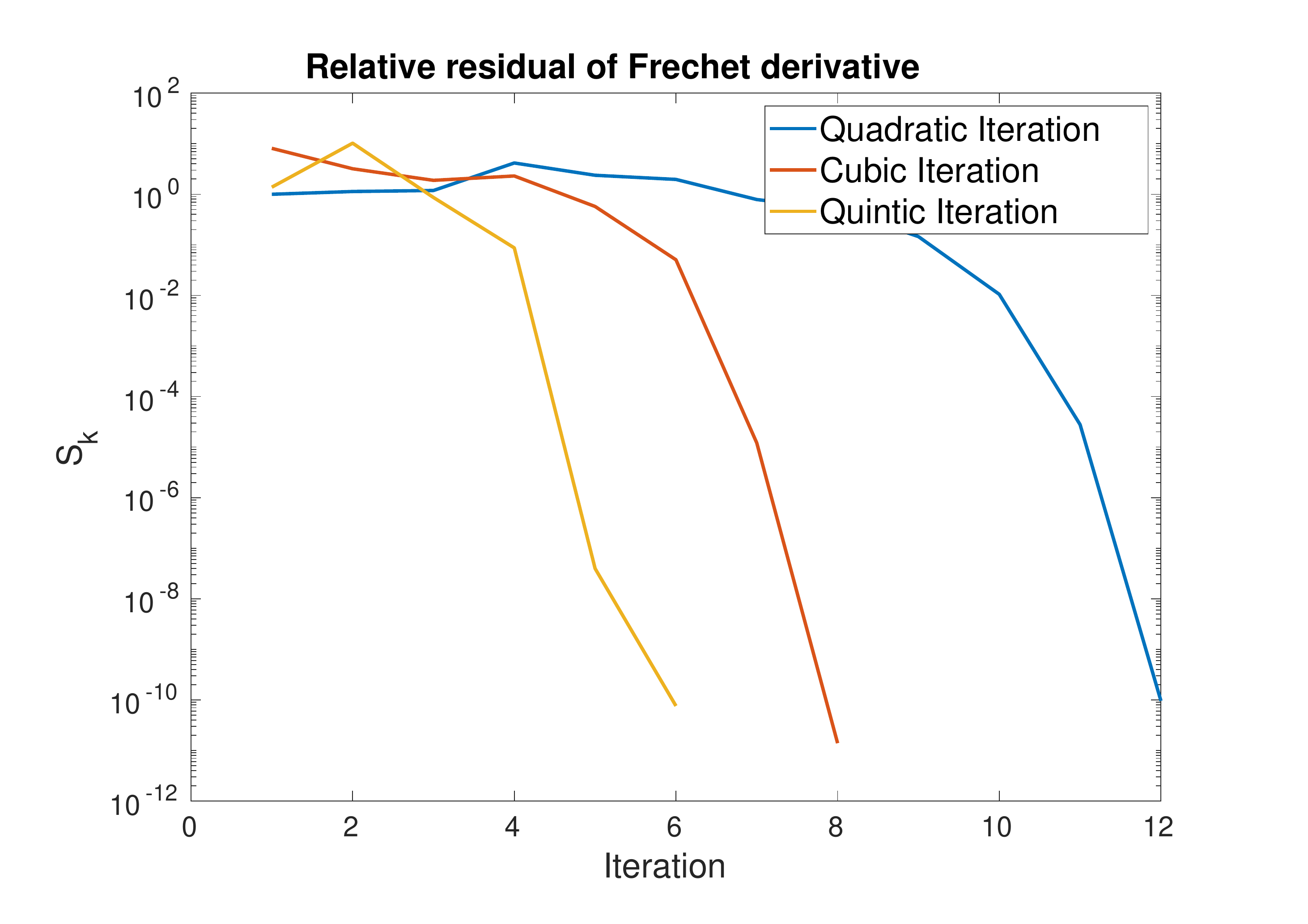}
	\caption{Convergence of function- and Fréchet-iterates for matrix sign}
	\label{fig:iteratesfirstsign}
\end{figure*}
\begin{figure*}[!h]
	\centering
	\includegraphics[clip, trim=1cm 0.8cm 3.1cm 1.2cm, width=0.49\linewidth]{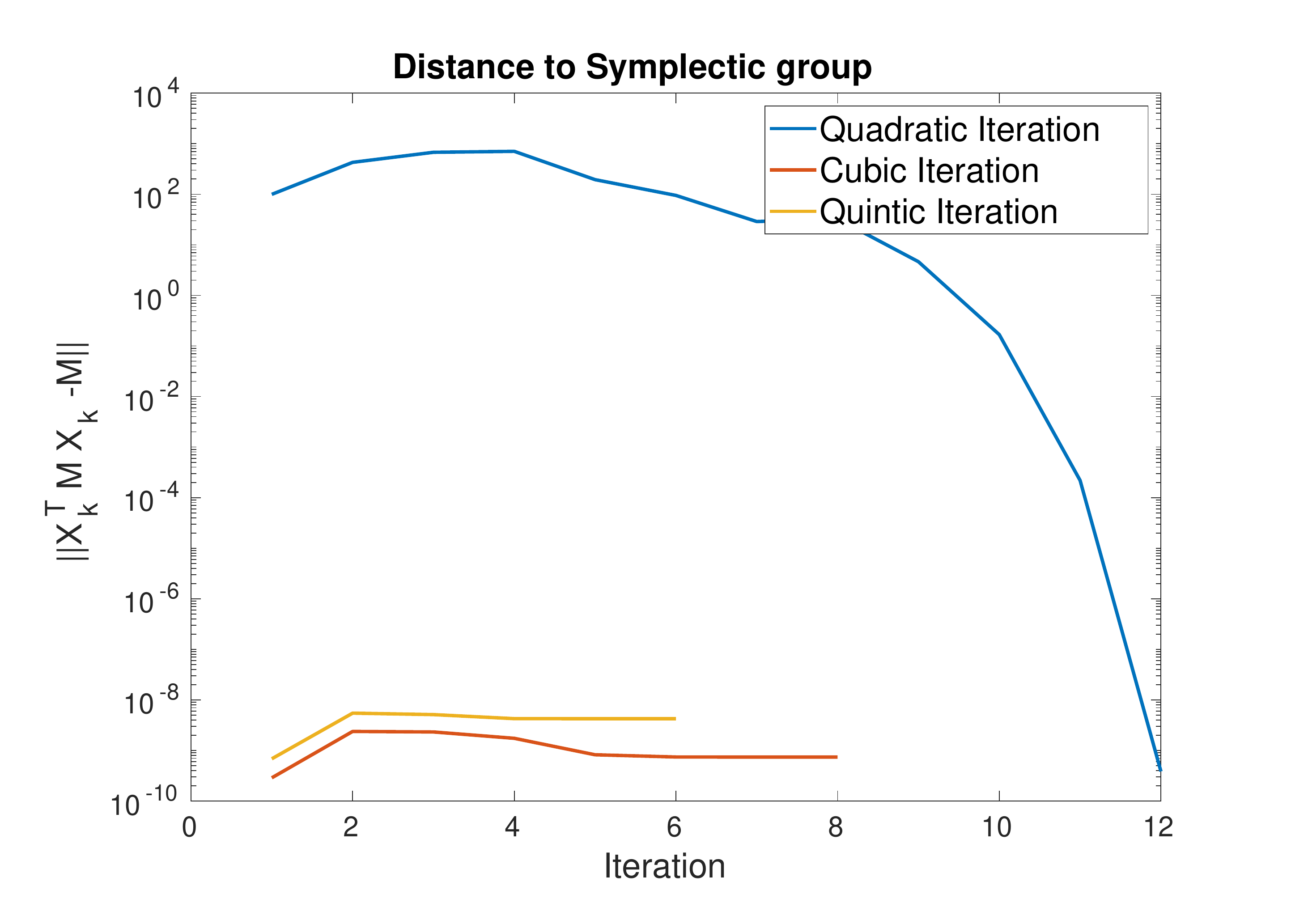}
	\includegraphics[clip, trim=1cm 0.8cm 3.1cm 1.2cm, width=0.49\linewidth]{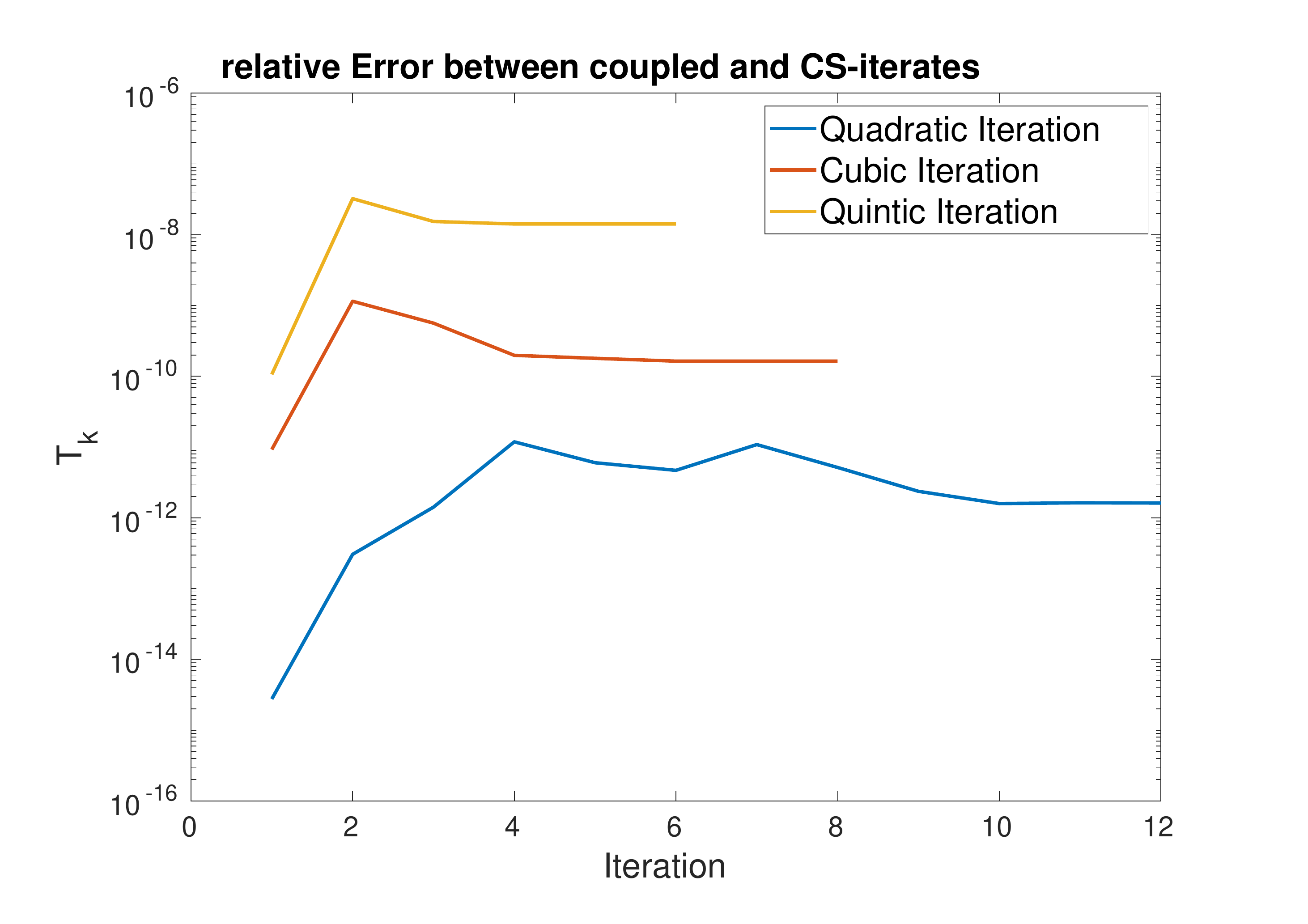}
	\caption{Distance to Symplectic group and difference between coupled and CS-iterates for matrix sign}
	\label{fig:iteratessecondsign}
\end{figure*}
For our experiments, we have tested the quadratic Newton iterations introduced in \Cref{sec:quadnewton} and the cubic and quintic Padé iteration. For the matrix sign, the polynomials $p_{\ell}(I-X^2)$ and $q_{\ell}(I-X^2)$ read
\begin{equation}
	p_{\ell}(I-X^2)=\begin{cases}3I + X^2 & \ell=1\text{ (cubic)}\\
						  5I + 10X^2 + X^4 & \ell=2\text{ (quintic)}\end{cases}	
\end{equation} 
and 
\begin{equation}
	q_{\ell}(I-X^2)=\begin{cases}I + 3X^2 & \ell=1\text{ (cubic)}\\
						  I + 10X^2 + 5X^4 & \ell=2\text{ (quintic)}\end{cases}.	
\end{equation}
We have tested higher order methods ($l=3,4$) as well, but we observed numerical instabilities for $l>2$ since the higher powers of $X$ lead to ill-conditioned systems of equations when trying to solve \eqref{eq:computepade} for $r_{\ell\ell}(I-X^2)$ if the condition number of $X$ exceeds the range of $10^2$ to $10^3$.\\
We use the direct approach from \Cref{sec:direct} for computing $F(A)$ and the derivative $L_F(A,E)$ up to high accuracy and stop our iteration, whenever the relative errors 
\begin{equation}
R_k\coloneqq\frac{||X_k - F(A)||_F}{||F(A)||_F} \text{ and }S_k\coloneqq \frac{||E_k - L_F(A,E)||_F}{||L_F(A,E)||_F} \label{eq:relerrors}
\end{equation}
in $X_k$ and $E_k$ ($\mathrm{Re}(\widehat{X}_k)$ and $\mathrm{Im}\left(\frac{\widehat{X}_k}{h}\right)$, respectively)  satisfy
\begin{equation*}
	R_k < \tau \quad\text{and}\quad S_k < \tau,\quad \tau=10^{-8}.
\end{equation*}
To create a random automorphism of desired size and condition number, we use software designed by Higham and Jagger. Higham's algorithm for pseudo-orthogonal matrices is available through the \texttt{gallery}-command in MATLAB 2021a, the algorithms for symplectic and perplectic matrices that we use can be obtained from \cite{toolbox}. The direction matrix $E$ is chosen randomly using MATLAB's \texttt{rand()}-command.\\
The results displayed in \Cref{fig:iteratesfirstsign} show the convergence behaviour of the quadratic CS-sign iteration \eqref{eq:CSsign} and the corresponding CS-Padé iterations \eqref{eq:CSpadesignthm} for $\ell=m=1$ (cubic) and $\ell=m=2$ (quintic). The matrix $A$ is a random symplectic $400$-by-$400$ matrix with relative condition number \texttt{cond(A)=80}, the CS-stepsize $h$ is chosen to be of order of machine precision ($\approx 10^{-16}$). One can see that the relative error decreases with the same order in the function iterates and the Fréchet iterates in every step and that the Padé iterations converge significantly faster than the standard quadratic iterations, which is due to them having a higher order of convergence. The tolerance of $\tau=10^{-8}$ is achieved after six steps by the quintic iteration, while the quadratic iteration requires twelve steps for convergence. The cubic iteration converges after eight steps.\\
In \Cref{fig:iteratessecondsign}, the absolute error in the automorphism condition $||X_k^T M X_k - M||_F$ and the relative error between the CS-iterates and the coupled iterates are displayed. The benefit of the Padé iteration becomes clear in the left plot, where one can observe that the quadratic iteration converges into the symplectic group when the iterates converge, while the iterates generated by the two Padé-schemes are symplectic up to an absolute error of $10^{-8}$ in every step. On the right hand side, we plot the relative error 
\begin{equation*}
T_k\coloneqq\frac{\left|\left|X_k-\mathrm{Re}\left(\widehat{X}_k\right)\right|\right|_F+\left|\left|E_k-\mathrm{Im}\left(\frac{\widehat{X}_k}{h}\right)\right|\right|_F}{||F(A)||_F + ||L_F(A,E)||_F}
\end{equation*}     
between the CS-iterates and the coupled iterates. Note that \autoref{thm:CSpade} states an error of order $h^2$, which should be around $10^{-32}$ for $h$ being approximately machine precision. For numerical reasons, the error is not as small as expected, nonetheless, the two approaches appear to generate the same iterates up to our desired convergence accuracy of $\tau=10^{-8}$ in the case of the quintic iteration and up to even higher accuracy in the cubic and quadratic case.\\
For all the iterative schemes discussed in this paper, we tested the scaling techniques presented by Higham \cite{matrixfunctions} but did not investigate any significant convergence speed or stability improvements justifying the computational effort required for computing the scaling parameters.
\section{Conclusion and further research}
In this paper, we discussed the sign, square root and polar function and compared a variety of direct and iterative methods for evaluating them. Our main focus was on combining the iterative schemes with the CS to obtain iterative procedures for evaluating the functions and computing the Fréchet derivative simultaneously. We extended the results of Al-Mohy and Higham concerning Newton's method for the matrix sign to the square root and polar function. Using a similar approach, we were then able to combine the CS and the family of Padé iterations, which provided us an iterative method for evaluating the functions and Fréchet derivatives while approximately maintaining automorphism group structure in the process. By proving our main result \Cref{thm:CSpade}, we also used and showed that the CS-iterations are in fact equal to the coupled iterates discussed by Gawlik and Leok (up to $\mathcal{O}(h^2)$). To conclude with our contribution, we put a quick note on how the connection between the coupled and CS-iterations can be used to obtain inexact methods for higher Fréchet derivatives. We finished off with some numerical experiments emphasizing our theoretical results.\\ 
The usage of the Complex Step in combination with iterative schemes can be a point of further research, where some effort can be put into extending our ideas to different matrix functions or iterative approaches and have a closer look on higher derivatives. We will further pursue the application of the CS to matrix valued problems as it provides an easy to implement way of solving Fréchet derivative related problems.

\appendix

\end{document}